\documentclass{amsart}

\usepackage{amsthm, amssymb,amsmath}
\usepackage{mathscinet}
\usepackage[utf8]{inputenc}
\usepackage{hyperref}
\usepackage{graphicx}
\usepackage{subcaption}
\usepackage{dsfont}
\usepackage{rotating}

\setlength{\parindent}{1em}
\setlength{\parskip}{0.8em}

\newtheorem{theorem}{Theorem}[section]

\newtheorem{proposition}[theorem]{Proposition}
\newtheorem{lemma}[theorem]{Lemma}
\newtheorem{corollary}[theorem]{Corollary}
\newtheorem*{definition*}{Definition}
\newtheorem*{theorem*}{Theorem}


\renewcommand{\P}{\mathbb{P}}

\newcommand{\V}{\mathcal{V}}

\newcommand{\R}{\mathbb{R}}
\newcommand{\Z}{\mathbb{Z}}
\newcommand{\N}{\mathbb{N}}

\renewcommand{\S}{\mathcal{S}}
\newcommand{\F}{\mathcal{F}}
\newcommand{\X}{\mathcal{X}}

\newcommand{\nil}{\text{Nil}}

\newcommand{\e}{\varepsilon}

\newcommand{\G}{\Gamma}
\newcommand{\g}{\gamma}

\newcommand{\pp}{\partial}
\newcommand{\la}{\lambda}
\newcommand{\La}{\Lambda}

\newcommand{\aaa}{\mathfrak a}

\newcommand{\s}{\sigma}

\newcommand{\x}{\times}

\newcommand{\Om}{\Omega}

\newcommand{\U}{\mathcal U}
\newcommand{\CC}{\mathcal C}

\newcommand{\M}{\mathcal M}

\DeclareMathOperator{\Gr}{Gr}
\DeclareMathOperator{\SL}{SL}

\DeclareMathOperator{\id}{Id}

\newcommand{\dist}{\operatorname{dist}}

\begin{document}

\title{Dimension gap  and variational principle for  Anosov representations}
\author{Fran\c cois Ledrappier and Pablo Lessa}
\address{Fran\c cois Ledrappier, Sorbonne Universit\'e, UMR 8001, LPSM, Bo\^{i}te Courrier 158, 4, Place Jussieu, 75252 PARIS cedex
05, France,} \email{fledrapp@nd.edu}
\address{ Pablo Lessa,  CMAT, Facultad de Ciencias, Iguá 4225,  11400 Montevideo, Uruguay}  \email{lessa@cmat.edu.uy}

\subjclass{37C45, 37D99, 20F67, 28A80}\keywords{Anosov representation, dimension}
\thanks{FL was partially supported by IFUM; PL thanks CSIC research project 389}

\maketitle
\begin{abstract} We consider a representation of a finitely generated group \( \G\) in \(\SL (d, \R)\) that is Zariski dense and \(k\)-Anosov for at least two values of \(k\). We exhibit a gap for the Minkowski dimension of minimal sets for the action of \(\G\) on flags spaces. The proof uses a variational principle for the action on partial flags.
\end{abstract}

\section{Introduction}

\subsection{Main results}

Let \((\Gamma ,\S)\) be a finitely generated  group with a symmetric set of generators \(S\). 

For \(d \ge 2\), consider \(P\) a subset of \(\lbrace  1,\ldots, d-1\rbrace\).
A representation  \(\rho: \Gamma \to \SL(d,\R)\)
 is called  a {\it{\(P\)-Anosov}} representation if there exists \(c > 0\) such that
\[\frac{s_{p}(\rho(\gamma))}{s_{p+1}(\rho(\gamma))} > c\exp(c|\gamma|),\]
for all \(\gamma \in \Gamma\) and \(p \in P\). Here \(|\gamma|\) denotes word length in  \(\Gamma\) with respect to some fixed finite symmetric generating set \( S\) and, for  \(1\le i\le d, \, s_i(\g)\) denotes the \(i\)-th largest singular value of \(\g \in \SL(d,\R)\) with respect to the usual inner product on \(\R^d\). A representation is called {\it{Borel-Anosov}} if it is \( \lbrace  1,\ldots, d-1\rbrace\)-Anosov.

Anosov representations have been introduced by F. Labourie (\cite{Lab06}) and have been the subject of many studies (cf. in particular \cite{GGKW17}, \cite {KLP16}, \cite{KLP17}, \cite{KLP18a}, \cite{BPS19} and \cite{canary} for a recent survey). It is widely accepted that non-trivial Anosov representations present the right analog of convex cocompact representations in higher rank. Many properties are suggested by this analogy. In another direction, we present in this note a phenomenon which is purely higher rank: there is a dimension gap for minimal invariant subsets for the action of \( \rho (\G) \) on the  spaces of flags. The proof uses a new variational principle for the action of \( \rho (\G) \) on the spaces of partial flags (see below theorem \ref{theoremb}).

Let \( Q\) be a  non-empty subset of  \(\lbrace 1,\ldots, d-1 \rbrace\). We consider the space \( \F_Q\) of partial flags with signature \(Q,\) endowed with a rotationally invariant Riemannian metric.  Let us recall the definition of the Minkowski dimension of a set \( \La \subset \F_Q\): for \( \e >0,\) let \( N(\La, \e) \) be the covering number of the set \( \La \) by balls of radius \( \e\) in  the metric of \( \F_Q\). The {\it {Minkowski dimension }} \( \dim _M(\La ) \) is defined by 
\[ \dim _M (\La ) \; : =\; \limsup\limits_{\e \to 0} \frac{\log N(\La , \e)}{\log (1/\e)} .\]
Our result is 
\begin{theorem}[Dimension gap for minimal sets of Anosov representations]\label{main}
 With the above notations, let \((\G, S)\) be a  finitely generated group and  \(\La\) be a minimal invariant set for the action of \( \rho (\G) \) on \( \F _Q.\) Assume that the representation \(\rho\) is \( P\)-Anosov for some \( P \subset \lbrace 1,\ldots, d-1 \rbrace\) and such that \( \rho (\G)\)  is Zariski dense in \( \SL(d,\R).\) Then, with \( M := \# (Q \cap P),\) \[\dim_M(\Lambda) \le \dim(\F_Q) - \frac{M - 1}{2}.\] 
\end{theorem}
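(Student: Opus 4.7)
The plan is to reduce the bound to a Minkowski dimension estimate on the image of the Anosov boundary map in the smaller flag manifold $\F_{P \cap Q}$, and then to prove that estimate by a covering argument that exploits the anisotropic contraction of $\rho(\G)$ near attracting fixed flags of loxodromic elements.

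\textbf{Reduction to the boundary map.} Consider the forgetful projection $\pi \colon \F_Q \to \F_{P \cap Q}$, which is continuous, $\rho(\G)$-equivariant, and a smooth fibration with compact fibers of dimension $\dim \F_Q - \dim \F_{P \cap Q}$. Equivariance implies $\pi(\La)$ is a closed $\rho(\G)$-minimal subset of $\F_{P \cap Q}$: if $Y \subsetneq \pi(\La)$ were a proper closed invariant subset, $\pi^{-1}(Y) \cap \La$ would contradict minimality of $\La$. By Zariski density of $\rho(\G)$ in $\SL(d,\R)$ and Benoist's theorem, the unique $\rho(\G)$-minimal set in $\F_{P \cap Q}$ is the image $\xi(\partial \G)$ of the Anosov boundary map. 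Hence $\pi(\La) = \xi(\partial \G)$, and the standard covering bound for a Lipschitz fibration gives
\[
\dim_M(\La) \;\le\; \dim_M(\xi(\partial \G)) + \dim \F_Q - \dim \F_{P \cap Q}.
\]
It therefore suffices to prove $\dim_M(\xi(\partial \G)) \le \dim \F_{P \cap Q} - (M-1)/2$.

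\textbf{Covering estimate and the $(M-1)/2$ gap.} For $\e > 0$ small, use the CAT$(-K)$ shadow lemma to partition $\partial \G$ into cylinders indexed by elements $\g \in \G$ of word length $n \approx c^{-1}\log(1/\e)$. The boundary map sends each cylinder into a neighborhood of the attracting fixed flag of $\rho(\g)$ in $\F_{P \cap Q}$, where $\rho(\g)$ acts as a contraction whose differential has eigenvalue $s_j(\rho(\g))/s_i(\rho(\g))$ on the root direction $E_{ij}$ ($i<j$). The Anosov hypothesis gives $s_p/s_{p+1} \ge \exp(cn)$ for each $p \in P$, so the image of a unit ball under $\rho(\g)^{-1}$ is a highly anisotropic ellipsoid. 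In particular, for each consecutive pair $p_k < p_{k+1}$ of indices in $P \cap Q$, the non-simple root direction $E_{p_k,\,p_{k+1}+1}$ contracts at the \emph{product} rate $s_{p_{k+1}+1}/s_{p_k}$, which is exponentially smaller than either adjacent simple-root rate (this direction arises as a Lie bracket of the simple directions at $p_k$ and $p_{k+1}$). Covering each ellipsoid by $\e$-balls and summing over the $\sim e^{hn}$ cylinders yields, for each such non-simple direction, a Heisenberg--Carnot saving of $\tfrac{1}{2}$ in the Minkowski exponent; since $P \cap Q$ contributes $M-1$ consecutive pairs, the total saving is $(M-1)/2$.

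\textbf{Main obstacle.} The delicate step is turning the root-space heuristic into a rigorous Minkowski covering bound. This will require: uniform control on how shadow cylinders map into the attracting basin of $\rho(\g)$ (from the CAT$(-K)$ hypothesis combined with Anosov estimates); sharp bounds on the Cartan projection of $\rho(\g)$ to ensure the contractions are genuinely exponential in $|\g|$ uniformly in $\g$; and a use of Zariski density to exclude degenerate alignment of $\xi(\partial \G)$ with proper sub-distributions of the ``mixed'' root directions --- this last ingredient being available via simplicity of the Lyapunov spectrum (Goldsheid--Margulis, Benoist) for random walks on $\rho(\G)$.
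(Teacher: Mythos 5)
Your first step --- projecting along $\pi\colon \F_Q \to \F_{Q\cap P}$, identifying the image of a minimal set with the limit set, and adding $\dim \F_Q - \dim\F_{Q\cap P}$ --- is exactly the reduction the paper makes, and it is fine. The problem is the second step, where the entire content of the theorem lives. Covering the image of an $R$-shadow by $\e$-balls, when that image is an anisotropic ellipsoid with semi-axes $e^{-\zeta_{i,j}(\g)}$ in the root directions, costs $\prod_{(i,j)} \max\bigl(e^{-\zeta_{i,j}(\g)}/\e, 1\bigr)$ balls in the Riemannian metric; summing over a stopping-time family of $\g$'s gives precisely the bound $\dim_M(\La_\rho) \le \dim_F^P(\rho)$ (the paper's Theorem \ref{theoremc}) and nothing more. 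There is no ``Heisenberg--Carnot saving of $\tfrac12$'' hiding in that computation: the Carnot heuristic compares the Carnot--Carath\'eodory metric with the Riemannian one, and to extract any saving you would need the limit set to be quantitatively tangent to the distribution spanned by the simple-root directions, which you do not establish and which is not available (the boundary map is only H\"older). As written, the mechanism you propose for producing the $(M-1)/2$ is an unproven assertion, and I do not believe it can be made to work in this form.

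The actual source of the gap in the paper is entropy-theoretic, not metric. Because $\rho$ is $P$-Anosov, the random-walk entropy $h_\mu$ of any $\mu\in\M$ is realized as the Furstenberg entropy of the stationary measure on \emph{each} Grassmannian $\Gr(p,\R^d)$, $p\in P$, simultaneously (via injectivity of the boundary maps $\xi^p$ and the Poisson-boundary identification of Kaimanovich). This yields $M$ separate constraints $h_\mu \le \sum_{(i,j)\in S_p}(\lambda_i-\lambda_j)$, and a purely combinatorial averaging of these $M$ constraints (Lemma \ref{mainlemma}) produces the term $\frac{M(M-1)}{2}$, hence the deficit $\frac{M-1}{2}$ in the Lyapunov dimension. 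To transfer this to the Minkowski dimension one still needs the reverse inequality $\dim_F^P(\rho) \le \sup_\mu \dim_{LY}^P(\rho_*\mu)$, which the paper obtains by building a higher-rank Patterson--Sullivan measure of the correct conformal exponent (Quint) and realizing it as a stationary measure of a finite-first-moment random walk (Connell--Muchnik, which is where the $\mathrm{CAT}(-K)$ hypothesis enters). Your proposal contains none of these ingredients, and your appeal to simplicity of the Lyapunov spectrum to ``exclude degenerate alignment'' does not substitute for them. So: correct reduction, but the heart of the proof is missing.
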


In the case \(d = 2\) the bound given by the above theorem is trivial.  And, indeed, uniform discrete subgroups of \(\SL(2,\R)\) have full limit set. A dimension gap for the limit set of classical Schottky groups was proved in \cite{Doy88}.  We note however, that  there exist uniform discrete (and thus convex co-compact) subgroups of isometries of 
 rank one symmetric spaces and thus there is no dimension gap for convex cocompact representations in rank one. On the other hand, there are many conditions that  ensure that the limit set is  a Lipschitz submanifold of the flag space (see the discussions in \cite{PSW19}). Our result is easy in those cases.

If \(\rho\) is \(P\)-Anosov then it is also \(P \cup P^\ast\)-Anosov for \(P^\ast  = \lbrace d-p: p \in P\rbrace\). Theorem \ref{main} gives a non-trivial upper bound for the dimension of any minimal invariant set  in \(\F_{Q}\) for all  \(Q\) with \(\#(Q\cap ( P \cup P^\ast)) \ge 2\). In particular, as soon as the representation is \(k\)-Anosov for some \( k \not = d/2\)   and Zariski dense, then the codimension of the limit set in the full flag space is at least 1/2.

There are a few simple reductions which allow us to state our core result.  Firstly, we observe that the bundle \(\pi _{Q, Q\cap P} : \F _Q \to \F_{Q\cap P} \) is smooth, so that for any closed  \( \La  \subset \F_{Q\cap P}, \) we have 
\[ \dim _M ((\pi _{Q, Q\cap P} )^{-1} (\La )) \; = \; \dim _M (\La) + \dim \F_Q - \dim \F _{Q\cap P} .\]
So, it suffices to show theorem \ref{main} for the case \( Q= P\). In that case, the unique minimal invariant subset of \(\F_{P}\) is the limit set \( \La _\rho.\)

 By considering \( \G /{\textrm {Ker}} \rho,\) we may assume that the representation \(\rho\) is faithful.  Furthermore, we may assume  that \(\Gamma\) is non-elementary hyperbolic since, on the one hand, an Anosov subgroup of \(\SL (d,\R) \) is hyperbolic (\cite{BPS19}, theorem 3.2) and, on the other hand,  the limit set of an Anosov representation of an elementary (i.e. virtually cyclic) hyperbolic group is finite. Taking all this into account, we  only have to prove 
\begin{theorem}[Dimension gap for limit sets of Anosov representations]\label{maintheorem}
Let \((\G,S)\) be a non-elementary, hyperbolic,  finitely generated group, \(  P \subset \lbrace  1,\ldots, d -1\rbrace\) with \( M:= \#P \ge 2 \) , \(\rho \) a faithful \(P\)-Anosov representation of \(\G\) in \( SL(d,\R) \) with \(\rho(\G)\) Zariski dense  in \( SL(d,\R) \) and \(\La _\rho \subset \F_{P} \) the limit set of the representation \( \rho.\)  Then, 
 \[\dim_M(\Lambda _\rho) \le \dim(\F_{P}) - \frac{M - 1}{2}.\] \end{theorem}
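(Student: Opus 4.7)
The plan is to bound $\dim_M(\La_\rho)$ by transporting a Patterson--Sullivan measure from $\partial \G$ to $\La_\rho$, decomposing its local dimension along the tower of Grassmannian projections
\[\mathcal F_P \longrightarrow \mathcal F_{P\setminus\{p_M\}} \longrightarrow \cdots \longrightarrow \Gr_{p_1}(\R^d),\]
and exploiting Zariski density to extract a $\tfrac12$ saving on each of the $M-1$ transversal fibers. The $P$-Anosov hypothesis furnishes a H\"older limit map $\xi=(\xi^{p_1},\ldots,\xi^{p_M}):\partial \G\to \mathcal F_P$ with image $\La_\rho$, while the CAT$(-K)$ hypothesis on $(\G,S)$ allows a Patterson--Sullivan construction of a measure $\mu$ on $\partial \G$ calibrated to the cocycle $\sum_{p\in P}\log s_p(\rho(\g))$. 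The push-forward $\nu=\xi_\ast\mu$ is then a quasi-conformal measure on $\La_\rho$ whose transformation cocycle is a sum of Iwasawa cocycles of $\rho$. Frostman-type upper bounds on $\mu$ coming from the CAT$(-K)$ hypothesis, together with H\"older regularity of $\xi$, reduce the Minkowski bound on $\La_\rho$ to a pointwise upper bound on the local dimension $\overline{\dim}_{\mathrm{loc}}(\nu,\cdot)$.

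At a $\nu$-typical point, I would use a Ledrappier--Young style decomposition to express this local dimension as a sum over the $M$ levels of the tower, with each level's contribution bounded by a ratio of a conditional entropy to the corresponding Lyapunov exponent $\lambda_i$ proportional to $\log(s_{p_i}/s_{p_{i+1}})$; positivity of each $\lambda_i$ is precisely the $P$-Anosov hypothesis. The base level $\Gr_{p_1}(\R^d)$ is allowed to contribute the trivial bound $\dim \Gr_{p_1}$. The decisive claim is that on each of the $M-1$ transversal fibers above the base, the entropy/Lyapunov ratio is at most half the fiber dimension; summing the $M-1$ savings yields the bound $\dim_M(\La_\rho)\le \dim\mathcal F_P-(M-1)/2$.

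The main obstacle is precisely this $\tfrac12$ bound on each transversal fiber. Qualitative non-concentration of the conditional measure follows from a Benoist--Quint style classification of stationary measures on reductive homogeneous spaces: Zariski density of $\rho(\G)$ in $\SL(d,\R)$ prevents the conditional measure from concentrating on any proper closed analytic subset of positive dimension. The quantitative factor of $\tfrac12$, however, calls for additional geometric input. I would obtain it by using Zariski density and the Tits alternative inside the subgroup of $\SL(d,\R)$ acting on the fiber to produce a ping-pong pair whose induced action yields a self-affine IFS with a quantitative transversality estimate between its contracting and expanding directions; a Falconer/Bowen self-affine dimension argument then delivers the $\tfrac12$ factor, which ultimately reflects the duality between the top and bottom singular directions of $\rho(\g)$ on each transversal fiber.
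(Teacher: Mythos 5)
Your proposal has two serious gaps, one in the reduction and one in the mechanism that is supposed to produce the $1/2$.

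First, reducing the Minkowski bound to a pointwise bound on $\overline{\dim}_{\mathrm{loc}}(\nu,\cdot)$ at $\nu$-typical points does not work: controlling the local dimension of a single measure $\nu$ on $\La_\rho$ almost everywhere bounds, at best, the Hausdorff dimension of a set of full $\nu$-measure, not the covering numbers $N(\La_\rho,\e)$ of the whole compact set. To bound $\dim_M$ you would need a lower bound $\nu(B(x,\e))\ge \e^{s+o(1)}$ that is uniform in $x\in\La_\rho$ and in the scale $\e$ (an Ahlfors-regularity-type statement), which is not available for higher-rank Patterson--Sullivan measures. The paper avoids this entirely: the Minkowski bound (theorem \ref{theoremc}) is obtained by a direct covering argument, covering $\La_\rho$ by shadows of group elements and covering each image shadow by $\e$-balls via the linearizing fiber coordinates of \cite{LL23}, with the total count controlled by the convergent Falconer series $\Phi^P_\rho(s)$. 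Quint's Patterson--Sullivan measure, and the Connell--Muchnik use of the $\textrm{CAT}(-K)$ hypothesis, enter only to prove the opposite inequality $\dim_F^P(\rho)\le\sup_\mu\dim_{LY}^P(\rho_*\mu)$ (theorem \ref{theoremb}), not to produce Frostman bounds on $\La_\rho$.

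Second, and more fundamentally, the $1/2$ does not come from transversality of a self-affine IFS in the fibers, and your fiber-by-fiber accounting is internally inconsistent: if each of the $M-1$ transversal fibers contributed at most half of its own dimension, the saving would be half the sum of the fiber dimensions, which is in general much larger than (and different from) $(M-1)/2$. In the paper the gap comes from an entropy argument: since each boundary map $\xi^p$, $p\in P$, is injective and $\partial\G$ is the Poisson boundary (Kaimanovich), the full random-walk entropy $h_\mu$ equals the Furstenberg entropy of the stationary measure on each single Grassmannian $\Gr(p,\R^d)$; this yields $M$ simultaneous bounds $h_\mu\le\sum_{(i,j)\in S_p}(\lambda_i-\lambda_j)$, one for each $p\in P$, from the entropy--exponent inequality of \cite{LL23}. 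Averaging these $M$ bounds and a purely combinatorial count of root pairs (lemma \ref{mainlemma}, where the triangular number $M(M-1)/2$ appears) gives $h_\mu+\frac{M-1}{2}(\lambda_1-\lambda_d)\le\sum_{(i,j)\in S(P)}(\lambda_i-\lambda_j)$, whence the Lyapunov-dimension gap of theorem \ref{theorema}. A ping-pong sub-semigroup would in any case only control a subsystem, which yields lower bounds on dimension, not the required upper bound on the entire limit set.
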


It seems natural to conjecture that a dimension gap will exist between the limit set and the corresponding space of flags, for any Zariski dense representation into a reductive algebraic  Lie group of rank \(r \ge 2\) which is Anosov with respect to at least two simple roots.

Let \(g\in \SL(d,\R)\).  The numbers \(\la_i (g) := \lim\limits _n 1/n \log s_i(g^n) \) are the logarithms of the moduli of the eigenvalues of \(g\). Apply the Oseledets theorem in the trivial case of one matrix 
\(g \in SL(d,\R).\) The Oseledets decomposition is obtained from the Jordan  decomposition by grouping together the spaces corresponding to the eigenvalues with  the same modulus. These moduli are the \( \exp \la _i (g).\) 

In our setting, if \( \g \in \G,\)  for all \( p \in P, \; \la _p (\rho(\g))  -\la _{p+1}(\rho(\g)) \geq c |\g| .\) 
For all \( \g\in \G\), we obtain a coarser decomposition by grouping together, for \( 1\leq k \leq M+1,\)  the Oseledets subspaces associated with 
\(\la _i(\rho (\g)) \) with \( \la _{p_{k-1}}(\rho (\g)) > \la _i(\rho (\g)) \ge \la _{p_{k}}(\rho (\g)).\)
\footnote{With the convention that  \(p_0=0,  \la_{p_0} =+\infty\) and \(  p_{M+1}:= d.\)}
 This associates to every \(\g \in \G \) an element \( \eta (\rho (\g))\) of the space  \(X_P\) of decompositions of \( \R^d = E_1 \oplus \ldots \oplus E_{M+1} ,\) with \( \dim E_k = p_k- p_{k-1},\) namely 
\[ \eta (\rho (\g)) \; = \; E_1(\rho (\g))  \oplus \ldots \oplus E_{M+1} (\rho (\g)) \] with the property  that  for all \( k = 1, \ldots , M+1,\) the space \(E_{k} (\rho (\g)) \) is generated by  the Jordan spaces of \( \rho(\g)\) such that the modulus \(\exp \la\) of the eigenvalue satisfies \[\exp  \la _{p_{k-1}}(\rho (\g)) >\exp  \la  \ge \exp \la _{p_{k}}(\rho (\g)).\]

 \begin{corollary}\label{eigenvectors}  Let \((\G,S)\) be a hyperbolic finitely generated group,  \(  P \subset \lbrace  1,\ldots, d-1 \rbrace\) with  \( M:= \# P .\)
With the above notations,  if  \(\rho \) is a \(P\)-Anosov, Zariski dense representation of  \(\G\) in \( \SL(d,\R) ,\) set \(\Om _\rho \subset X_P \) the closure of the set \( \{ \eta (\rho (\g)); \g \in \G\}.\) Then, 
 \[\dim_M(\Om_\rho) \le \dim X_P -M +1.\] \end{corollary}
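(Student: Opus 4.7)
The plan is to deduce the corollary from Theorem \ref{maintheorem} applied to both $\F_P$ and $\F_{P^*}$, where $P^* := \{d - p : p \in P\}$. Because $s_i(g^{-1}) = s_{d+1-i}(g)^{-1}$ for every $g \in \SL(d,\R)$, applying the $P$-Anosov inequality to $\g^{-1}$ immediately shows that $\rho$ is also $P^*$-Anosov. Since $\# P = \# P^* = M$, Theorem \ref{maintheorem} supplies
\[\dim_M \La_\rho \le \dim \F_P - \frac{M-1}{2} \qquad \text{and} \qquad \dim_M \La_\rho^* \le \dim \F_{P^*} - \frac{M-1}{2},\]
where $\La_\rho^* \subset \F_{P^*}$ denotes the $P^*$-limit set of $\rho$.

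Next I would identify $X_P$ with an open subset of $\F_P \times \F_{P^*}$ by sending a decomposition $\R^d = E_1 \oplus \cdots \oplus E_{M+1}$ to the pair of transverse flags whose $k$-th subspaces are $F_k := E_1 \oplus \cdots \oplus E_k \in \F_P$ and $H_k := E_{M+2-k} \oplus \cdots \oplus E_{M+1} \in \F_{P^*}$. Each $E_k$ can be recovered as an intersection of one subspace from each flag of complementary dimensions, and transversality is an open condition on the product, so this map is a diffeomorphism onto an open subset and
\[\dim X_P = \dim \F_P + \dim \F_{P^*}.\]

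Under this identification $\eta(\rho(\g))$ corresponds to the pair consisting of the attracting $P$-flag and the repelling $P^*$-flag of $\rho(\g)$. Standard results on Anosov representations upgrade the singular-value gap to an eigenvalue gap, so that every $\g$ with $|\g|$ exceeding a fixed constant is both $P$- and $P^*$-proximal, and its attracting/repelling flags lie in the minimal invariant sets $\La_\rho$ and $\La_\rho^*$ respectively. The finite exceptional set is negligible for upper Minkowski dimension, so passing to the closure gives $\Om_\rho \subset \La_\rho \times \La_\rho^*$, and the standard product bound for upper Minkowski dimension yields
\[\dim_M \Om_\rho \le \dim_M \La_\rho + \dim_M \La_\rho^* \le \dim \F_P + \dim \F_{P^*} - (M - 1) = \dim X_P - M + 1,\]
which is the asserted inequality.

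The main technical point, which I do not grind through here, is the verification that the attracting (resp.\ repelling) flag of $\rho(\g)$ really lies in $\La_\rho$ (resp.\ $\La_\rho^*$). I would rely on the standard fact that for a Zariski-dense $P$-Anosov representation every sufficiently long element is $P$-proximal and that its attracting fixed flag, being a limit of iterates applied to a generic flag, belongs to the unique minimal invariant subset.
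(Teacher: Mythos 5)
Your proposal follows essentially the same route as the paper: observe that \(\rho\) is also \(P^\ast\)-Anosov, identify \(X_P\) with the locus of transverse pairs in \(\F_P\times\F_{P^\ast}\) (so \(\dim X_P=\dim\F_P+\dim\F_{P^\ast}\)), show that \(\eta(\rho(\g))\) is determined by the pair of attracting/repelling flags lying in \(\La_\rho\times\La_\rho^\ast\), and combine the product bound for upper Minkowski dimension with theorem \ref{maintheorem} applied to both \(P\) and \(P^\ast\). The point you flag as unverified --- that the attracting flag of \(\rho(\g)\) lies in \(\La_\rho\) --- is handled in the paper by citing \cite{GGKW17}, Lemma 2.26 (an argument going back to Ruelle): the unstable flag built from the \(E_i(\rho(\g))\) equals \(\lim_{n\to+\infty}\xi(\g^n)\), hence lies in \(\La_\rho\), and the stable flag equals \(\lim_{n\to-\infty}\xi(\g^n)\), hence lies in \(\La_\rho^\ast\). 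Your sketch of this step is the right one.

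The one place where something is genuinely missing is the transfer of the Minkowski bound through the identification. The map from transverse pairs of flags to decompositions is a diffeomorphism only on the \emph{open} transversality locus, and its derivative blows up as the pair degenerates; a smooth but not uniformly Lipschitz map can increase upper Minkowski dimension (e.g. \(x\mapsto\sqrt{x}\) raises the box dimension of \(\{n^{-2}\}\) from \(1/3\) to \(1/2\)). So to conclude \(\dim_M\Om_\rho\le\dim_M(\La_\rho\times\La_\rho^\ast)\) you must know that the closure of \(\{(f(\rho(\g)),f'(\rho(\g)))\}\) stays inside a \emph{compact} subset of the transversality locus, on which the identification is uniformly bilipschitz. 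The paper supplies exactly this: by the \(P\)-Anosov property the angles between \(E_1(\rho(\g))\oplus\cdots\oplus E_k(\rho(\g))\) and the complementary sum are uniformly bounded below for all but finitely many \(\g\), so \(\Om_\rho\) is the image of a compact subset of \((\La_\rho\times\La_\rho^\ast)'\) under a uniformly Lipschitz map. Given the Anosov property this is a short additional observation, but without it the product-dimension step in your last display does not follow.
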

 
 Indeed, corollary \ref{eigenvectors} is trivial for \( d= 2\) and  for \( M=1\). It is trivial as well if \( \G\) is elementary. For \( d \ge 3\) and \( M\ge 2,\) we show in section  \ref{splitting} why  corollary  \ref{eigenvectors}   follows from theorem \ref{maintheorem}.

  The first  non-trivial case for our results  is when \( d=3\) and \( P= \{1,2\}.\) So, let \(\rho \) be a Borel-Anosov representation of the  finitely generated non-elementary group \((\G, S)\) in \( \SL(3,\R) \)  and we can consider the limit set \( \Lambda _\rho \) of the action of \(\rho(\G) \) on the space \(\F\) of complete flags in \(\R^3.\) Besides, for all  \(\g \in \G\), the matrix \(\rho(\g)\)  admits three distinct eigenspaces written \( (E_1 (\rho( \g)), E_2 (\rho( \g)), E_3 (\rho( \g)))\) in the order of the absolute values of the eigenvalues. Let \( \Om _\rho \) denote the closure of the \( \{(E_1 (\rho( \g)), E_2 (\rho( \g)), E_3 (\rho( \g))), \g \in \G \} \) in \(\P^2\x\P^2\x\P^2\). We obtain, from theorem \ref{maintheorem} and corollary \ref{eigenvectors},
  \begin{corollary}\label{2.5, 5}\[\dim_M(\Lambda_\rho) \le \frac{5}{2} < 3 = \dim(\F), \; \dim_M(\Om_\rho) \le  5 < 6 = \dim (\P^2\x\P^2\x\P^2).\] \end{corollary}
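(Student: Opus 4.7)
The plan is simply to specialize the preceding results to $d=3$, $P=\{1,2\}$, and hence $M = \#P = 2$, and verify the two dimension counts of the ambient spaces.

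First, for the limit set. The space $\F = \F_{\{1,2\}}$ of complete flags in $\R^3$ consists of pairs (line, plane) with the line contained in the plane. Parametrizing by first choosing the line in $\P^2$ ($\dim = 2$) and then a plane through it in the pencil $\P^1$ ($\dim = 1$) gives $\dim \F = 3$. Since $\rho$ is Borel-Anosov, it is in particular $P$-Anosov with $P=\{1,2\}$, and $\rho(\G)$ is Zariski dense by hypothesis, so Theorem \ref{maintheorem} applies and yields
\[\dim_M(\La_\rho) \le \dim \F - \frac{M-1}{2} = 3 - \frac{1}{2} = \frac{5}{2},\]
which is strictly less than $3$.

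Second, for the closure of the eigenvector configurations. Each element of $X_P$ is a decomposition $\R^3 = E_1 \oplus E_2 \oplus E_3$ with $\dim E_k = 1$, i.e.\ an ordered triple of lines in general position. Thus $X_P$ is an open subset of $\P^2 \times \P^2 \times \P^2$ of dimension $3\cdot 2 = 6$. Applying Corollary \ref{eigenvectors} gives
\[\dim_M(\Om_\rho) \le \dim X_P - M + 1 = 6 - 2 + 1 = 5,\]
which is strictly less than $6$.

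There is no real obstacle here beyond checking the dimension arithmetic; the content of the statement is entirely captured by Theorem \ref{maintheorem} and Corollary \ref{eigenvectors}, and the present corollary serves to display the smallest non-trivial instance of the phenomenon (a half-unit codimension gap in the complete flag variety, and a one-unit gap in the variety of eigenline configurations) for $\SL(3,\R)$.
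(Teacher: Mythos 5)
Your proof is correct and is essentially the paper's own derivation: the corollary is obtained by specializing Theorem \ref{maintheorem} and Corollary \ref{eigenvectors} to $d=3$, $P=\{1,2\}$, $M=2$, with the same dimension counts $\dim\F=3$ and $\dim X_P=6$. Nothing further is needed.
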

 Let \(\G\) be a surface group.  For the Hitchin component, the dimension of the projective limit set is 1 (see \cite{Lab06}) and our  result is not new. It is  is new and unexpected, as far as the authors are aware, for the Zariski dense representations that lie in the Barbot  component, i.e. the same connected  component as  the Teichm\"uller times Identity representations.

\subsection{Strategy of the proof of theorem \ref{maintheorem}}

\subsubsection{Random walk entropy}

Let \(\M\) be the set of probability measures \(\mu\) on \(\Gamma\) with countable support and  with finite first moment, meaning 
\[\sum\limits_{\gamma \in \Gamma}\mu(\gamma) |\gamma| < +\infty,\]
 such that the semi-group \(\Gamma_\mu\) generated by the support of \(\mu\) is non-elementary (i.e. contains two independent loxodromic elements).
 
  It is well known that if \(\mu \in \M\) then \(\mu\) has finite Shannon entropy, meaning
\[H(\mu) = -\sum\limits_{\gamma \in \Gamma}\mu(\gamma)\log(\mu(\gamma)) < +\infty.\]

The random walk entropy \(h_\mu\) of \(\mu \in \M\) is defined by
\begin{equation}\label{rwentropy} h_\mu = \lim\limits_{n \to +\infty}\frac{1}{n}H(\mu^{\ast n}),\end{equation}
where \(\mu^{\ast n}\) is the \(n\)-fold convolution of \(\mu\) with itself.

\subsubsection{Lyapunov exponents}
Denote \(\aaa^+  \) the cone \[ \aaa^+ := \lbrace a \in \R^{d}: a_1 \ge \dots \ge a_d, \sum a_i = 0\rbrace.\] For \( g \in \SL(d,\R) , \log S(g) \in \aaa ^+,\) where  \[\log S(g): = (\log s_1(g),\ldots, \log s_d(g)). \]
The Lyapunov exponents \( \{ \lambda_i(\rho _\ast\mu) \} \in \aaa ^+\) induced by \(\rho\), are defined by
\begin{equation}\label{exponents} \lambda(\rho_*\mu):= \{ \lambda_i(\rho _\ast \mu) \} = \lim\limits_{n \to +\infty} \frac{1}{n}\sum\limits_{\gamma \in \Gamma}\mu^{\ast n}(\gamma)\log\left(S(\rho(\gamma))\right).\end{equation}

The limits exist by Fekete lemma applied to \( \{\sum\limits_{j\le i}\sum\limits_{\gamma \in \Gamma}\mu^{\ast n}(\gamma)\log\left(s_j(\rho(\gamma))\right) \}_{n \in \N}.\)

\subsubsection{Lyapunov and Falconer dimension}\label{Lyadimension}

We say a pair \((i,j), 0<i<j \le d\) is separated by \(p \in P\) if \(i \le p < j\).  For each \(p \in P\) let \(S_p\) be the set of pairs separated by \(p\), and define \(S(P) = \bigcup\limits_{p \in P}S_p\). On the cone \(\aaa ^+\) we define the roots
\[\alpha_{i,j}(a) = a_i - a_j,\]
for \(i < j\) and notice that they are non-negative on \(\aaa ^+\). Lyapunov and Falconer dimensions are special values of Lyapunov and Falconer functionals on \(\aaa^+.\)

We  define for \(h \ge 0\) the Lyapunov functional on \(\aaa ^+\) by 
\begin{align*}L^P_h(a) = &\; \text{maximum of } \sum\limits_{(i,j) \in S(P)}r_{i,j}
\\ &\text{ subject to }0 \le r_{i,j} \le 1
\; \text{ and } \sum\limits_{(i,j) \in S(P)} r_{i,j}\alpha_{i,j}(a) \le h.
\end{align*}

For \(r \ge 0\) we also define the Falconer functional on \(\aaa ^+\) by
\begin{align*}F^P_r(a) = &\; \text{minimum of } \sum\limits_{(i,j) \in S(P)}r_{i,j}\alpha_{i,j}(a)
\\ &\text{ subject to }0 \le r_{i,j} \le 1
\; \text{ and } \sum\limits_{(i,j) \in S(P)} r_{i,j} \ge r.
\end{align*}

\begin{lemma}[Duality between Falconer and Lyapunov functionals]\label{lyapunovvsfalconer}
Given \(a \in \aaa^+\), for all \(r,h \ge 0\) one has \(F_r^P(a) \le h\) if and only if \(L_h^P(a) \ge r\).

Furthermore, let \(d(a)\) be the number of pairs \((i,j) \) in \(S(P)\) with  \(\alpha_{i,j}(a) = 0\). 
Then, \( F^P_r(a) = 0 \) for \( r \in [0,d(a)]\) and  \(r \mapsto F^P_r(a)\) is an increasing homeomorphism from \([d(a),D]\) to \([0,F_{D}^P(a)]\) where \(D = \# S(P)=\dim (\F_P)\), and \(h \mapsto L_h^P(a)\) is its inverse.
\end{lemma}
\begin{proof}
  To establish the first claim observe that directly from the definitions, both conditions \(F_r^P(a) \le h\) and \(L_h^P(a) \ge r\) are equivalent to there being some choice of \(r_{i,j} \in [0,1]\) for \((i,j) \in S(P)\) such that
  \[\sum\limits_{(i,j) \in S(P)} r_{i,j}\alpha_{i,j}(a) \le h\text{ and }\sum\limits_{(i,j) \in S(P)} r_{i,j} \ge r.\]
Moreover, in computing \(F^P_r(a),\) we can always take \( r_{i,j} = 1 \) for the pairs  \((i,j) \) in \(S(P)\) with  \(\alpha_{i,j}(a) = 0\). It follows that  \( F^P_r(a) = 0 \) for \( r \in [0,d(a)]\). Then,  there is \(\e >0\) with  \(\alpha_{i,j}(a) > \e\) for all the  other pairs \((i,j)\). It follows immediately that \(F^P_{r+s}(a) >F^P_r(a) + \e s\) for all \(d(a) \le r < r+s \le D\).  Hence \(r \mapsto F^P_r(a)\) is  increasing on \([d(a), D]\).  Since \(F_{d(a)}^P(a) = 0\), \(r \mapsto F^P_r(a)\) it is an increasing  homeomorphism as claimed.

  Similarly, picking \(\e > 0\) small enough so \(\alpha_{i,j}(a) < \e^{-1}\) for all \((i,j) \in S(P)\), it follows that \(L^P_{h + s}(a) > L^P_h(a) + \e s\) for all \(0 \le h < h+s < F_D^P(a)\).  Hence, \(h \mapsto L_h^P(a)\) is an increasing homeomorphism as well.
  
  Let \(f:I \to J\) and \(g:J \to I\) be the two homeomorphisms under consideration where \(I = [d(a),D]\) and \(J = [0,F_D^P(a)]\).  By our first claim we have \(g(f(r)) \ge r\) for all \(r \in I\) and \(f(g(h)) \le h\) for all \(h \in J\).  However, substituting \(r = f^{-1}(h)\), the first inequality implies \(g(h) \ge f^{-1}(h)\) for all \(h \in J\), from which we obtain \(f(g(h)) \ge h\) for all \(h \in J\).  Hence, we have \(f(g(h)) = h\) for all \(h \in J\) as required.
\end{proof}

 Following \cite{DO80} and \cite{KY79}, we define the Lyapunov dimension of \(\rho_*\mu\) on \(\F_P\) by
 \[\dim_{LY}^P(\rho_*\mu) \;  := \; L_{h_\mu} (\lambda(\rho_*\mu)). \]
 
In the spirit of  \cite{Fal88}, we define the Falconer dimension \(\dim_F^P(\rho)\) of \(\rho\) relative to \(\F_P\) as the critical parameter \(r \ge 0\) for convergence of the series
\begin{equation}\label{falconerseries} \Phi _\rho^P (r ) \; := \; \sum\limits_{\gamma \in \Gamma} \exp (-(F_r^P\circ\log\circ S \circ \rho)(\gamma)).\end{equation}

\subsubsection{Proof of theorem \ref{maintheorem}}

Theorem \ref{maintheorem} is a direct consequence of the three following results. We assume that  \((\G,\S)\) is  a non-elementary, hyperbolic,  finitely generated group, \(  P \) a subset of \(\lbrace  1,\ldots, d -1\rbrace\) with \( M:= \#P \ge 2 \), \(\rho \) a faithful \(P\)-Anosov representation of \(\G\) in \( SL(d,\R) \) with \(\rho(\G)\) Zariski dense  in \( SL(d,\R) \) and we denote \(\La _\rho \subset \F_{P} \) the limit set of the action of  \( \rho (\G).\) 

\begin{theorem}[Upper bound on Lyapunov dimension]\label{theorema}
  For all \(\mu \in \M,\) one has 
  \[\dim_{LY}^P(\rho_*\mu) \le \dim(\F_P) - \frac{M - 1}{2}.\]
\end{theorem}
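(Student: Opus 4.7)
The plan is to pass to the Falconer side and combine a family of entropy inequalities, one per $p\in P$. Since $\rho$ is $P$-Anosov and Zariski dense, $\alpha_{i,j}(\lambda(\rho_*\mu))>0$ for every $(i,j)\in S(P)$, so Lemma \ref{lyapunovvsfalconer} applies and, writing $D:=\#S(P)=\dim\F_P$ and $\lambda:=\lambda(\rho_*\mu)$, the claim becomes $F^P_{D-(M-1)/2}(\lambda)\ge h_\mu$. Substituting $s_{i,j}:=1-r_{i,j}\in[0,1]$ turns this into
\[ h_\mu\ +\ \max\Bigl\{\sum_{(i,j)\in S(P)} s_{i,j}\,\alpha_{i,j}(\lambda)\ :\ s\in[0,1]^{S(P)},\ \sum s_{i,j}\le\tfrac{M-1}{2}\Bigr\}\ \le\ \sum_{(i,j)\in S(P)}\alpha_{i,j}(\lambda). \]

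The entropy input is a Furstenberg/Margulis--Ruelle type estimate on each Grassmannian. For every $p\in P$, the $p$-Anosov hypothesis and Zariski density produce a unique $\mu$-stationary probability on $\Gr(p,d)$ with partial dimensions along the root directions $(i,j)\in S_p$ bounded by $1$, yielding
\[ h_\mu\ \le\ \sum_{(i,j)\in S_p}\alpha_{i,j}\bigl(\lambda(\rho_*\mu)\bigr)\qquad(p\in P). \]
Any single such inequality only gives $L^{\{p\}}_{h_\mu}(\lambda)\le\dim\Gr(p,d)$, which is trivial, so the improvement in the theorem must arise from combining all $M$ of them.

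To squeeze out the $(M-1)/2$ gap I would take a convex combination with weights $w_p\ge 0$, $\sum_{p\in P}w_p=1$, chosen so that the resulting coefficients $c_{i,j}:=\sum_{p\in P:(i,j)\in S_p}w_p$ lie in $[0,1]$, equal $1$ on the $D-(M-1)/2$ pairs with smallest $\alpha_{i,j}(\lambda)$, and concentrate the total deficit $\sum(1-c_{i,j})=(M-1)/2$ on the remaining top pairs---exactly the extremal profile of the Falconer maximization above. Symmetrically splitting the weight between consecutive elements $p_k<p_{k+1}$ of $P$ on pairs straddling both should supply $1/2$ per consecutive pair, giving $(M-1)/2$ overall.

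The main obstacle is that the required weights depend on the $\lambda$-ordering of the $\alpha_{i,j}$, which one cannot choose in advance; I expect this to be handled by a minimax/LP-duality argument, perhaps by enriching the Furstenberg inequalities to a family indexed by subsets of $S_p$ rather than all of $S_p$, so that the convex hull is rich enough to realize the Falconer LP regardless of the ordering. A secondary technical point is proving the uniform partial-dimension bound $\le 1$ on each $\Gr(p,d)$, which is where the $P$-Anosov hypothesis and Zariski density of $\rho$ are really needed.
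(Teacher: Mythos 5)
Your reduction to showing \(F^P_{D-(M-1)/2}(\lambda)\ge h_\mu\) via Lemma \ref{lyapunovvsfalconer} is correct, and the per-\(p\) entropy inequalities you invoke are exactly Lemma \ref{entropyupperboundlemma} in the paper (proved there from \cite[Theorem 2.1]{LL23} together with Theorem \ref{entropytheorem}). But the central combination step does not work as stated, and there is a concrete obstruction you would hit immediately. The coefficients achievable by a convex combination of the inequalities are \(c_{i,j}=\sum_{p\in P:\,(i,j)\in S_p}w_p\); since \emph{every} \(p\in P\) separates \((1,d)\), one always has \(c_{1,d}=\sum_p w_p=1\). On the other hand, \(\alpha_{1,d}(\lambda)=\lambda_1-\lambda_d\) is the \emph{largest} of the roots, so the extremal profile in your Falconer maximization wants \(s_{1,d}=1-c_{1,d}\) to be as large as possible (at least \(\min(1,(M-1)/2)>0\) when \(M\ge 2\)), i.e. \(c_{1,d}<1\). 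So the extremal profile is never in the reachable set, already for \(d=3,\ P=\{1,2\}\), and your "minimax/LP-duality" patch does not obviously repair this because the inequality family you have is fixed (one per \(p\), not per sub-collection of roots).

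What actually closes the argument is to give up on matching the extremal Falconer profile and instead over-estimate the Falconer maximization crudely by its worst case: since \(\alpha_{1,d}\ge\alpha_{i,j}\) for all \((i,j)\in S(P)\), the maximum of \(\sum s_{i,j}\alpha_{i,j}\) subject to \(\sum s_{i,j}\le (M-1)/2\), \(s\in[0,1]^{S(P)}\), is at most \(\tfrac{M-1}{2}(\lambda_1-\lambda_d)\). Thus the theorem reduces to the single inequality (Lemma \ref{mainlemma} of the paper)
\[
h_\mu+\tfrac{M-1}{2}(\lambda_1-\lambda_d)\ \le\ \sum_{(i,j)\in S(P)}\alpha_{i,j}(\lambda).
\]
This is then proved by summing the \(M\) Furstenberg inequalities with \emph{uniform} weights, which leaves the purely combinatorial estimate
\[
M\,a_k(P)-\sum_{p\in P}b_k(p)\ \ge\ \frac{M(M-1)}{2}\qquad(k=1,\ldots,d-1),
\]
where \(a_k(A)=\#\{(i,j)\in S(A):i\le k<j\}\) and \(b_k(p)=a_k(\{p\})\). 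The paper proves this by enumerating \(P\) in order of increasing \(|p-k|\) and showing \(a_k(A_{i+1})\ge a_k(A_i)+1\) at each step. Your "symmetric splitting between consecutive \(p_k\)" intuition is pointing at this combinatorial gain, but it is not a substitute for the lemma, and without the over-estimation step the approach does not close.
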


\begin{theorem}[Variational principle]\label{theoremb}
One has  \begin{equation} \label{Lyapunov} \dim_F^P(\rho) = \sup\limits_{\mu \in \M}\dim_{LY}^P(\rho_*\mu).\end{equation}
\end{theorem}

\begin{theorem}[Inequality between Minkowski and Falconer dimension]\label{theoremc}
 One has 
 \[\dim_M(\Lambda_\rho) \le \dim_F^P(\rho).\]
\end{theorem}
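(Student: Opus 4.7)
The plan is to adapt the classical Falconer singular-value covering scheme to the Anosov setting on the flag manifold. The first ingredient is an \emph{ellipsoid estimate} for orbit patches: fix a flag $\xi_0 \in \Lambda_\rho$ in sufficiently general position and a small radius $r_0 > 0$, and set $B_0 := B(\xi_0, r_0)$. Using the $P$-Anosov hypothesis together with the standard identification of $T_\xi \F_P$ with the root spaces indexed by $S(P)$, one produces a constant $C_0 > 0$ such that, for every $\gamma \in \Gamma$, the image $\rho(\gamma) B_0$ sits inside an ellipsoidal neighborhood of $\rho(\gamma)\xi_0$ whose semi-axis in direction $(i,j) \in S(P)$ is at most $C_0 \exp(-\alpha_{i,j}(\log S(\rho(\gamma))))$; in particular its diameter is at most $C_0 \exp(-\min_{(i,j)\in S(P)} \alpha_{i,j}(\rho(\gamma)))$.

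The second ingredient is a \emph{shadow-type covering lemma}: there is a constant $C_1$ such that for every $t > 0$ one can produce a finite subset $T_t \subset \Gamma$ with $\min_{(i,j)\in S(P)}\alpha_{i,j}(\rho(\gamma)) \in [t - C_1, t + C_1]$ for every $\gamma \in T_t$, and $\Lambda_\rho \subset \bigcup_{\gamma \in T_t} B(\rho(\gamma) \xi_0,\, C_0 e^{-(t - C_1)})$. To establish this, for each $\xi \in \Lambda_\rho$ pick a sequence $\gamma_n \in \Gamma$ lying on a geodesic word in $(\Gamma, S)$ converging to $\xi$ in the Gromov boundary; standard Anosov estimates along such a geodesic give $d_{\F_P}(\rho(\gamma_n)\xi_0, \xi) = O(e^{-c|\gamma_n|})$, while the $P$-Anosov condition bounds the increments of $\min_{(i,j)}\alpha_{i,j}(\rho(\gamma_n))$ by a constant per generator. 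Hence the first $n$ at which $\min_{(i,j)}\alpha_{i,j}(\rho(\gamma_n)) \ge t$ overshoots by at most $C_1$ and places $\xi$ inside the required ball; take $T_t$ to be a finite subcover obtained by compactness of $\Lambda_\rho$.

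With these two ingredients the conclusion is a short computation with the Falconer functional. For every $\gamma \in T_t$ and every feasible weighting $\{r_{i,j}\}$ with $\sum r_{i,j} \ge r$, one has $\sum r_{i,j}\alpha_{i,j}(\rho(\gamma)) \ge r(t - C_1)$, so $F_r^P(\rho(\gamma)) \ge r(t - C_1)$. Summing,
\[
|T_t|\, e^{-r(t - C_1)} \le \sum_{\gamma \in T_t} e^{-F_r^P(\rho(\gamma))} \le \Phi_\rho^P(r).
\]
For $r > \dim_F^P(\rho)$ the right-hand side is finite, so $|T_t| \le e^{rC_1} \Phi_\rho^P(r)\, e^{rt}$. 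Setting $\e := C_0 e^{-(t - C_1)}$, the shadow lemma gives $N(\Lambda_\rho, \e) \le |T_t| = O(\e^{-r})$, hence $\dim_M(\Lambda_\rho) \le r$; letting $r \downarrow \dim_F^P(\rho)$ finishes the argument.

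The main obstacle is the shadow-type covering lemma: the $P$-Anosov hypothesis must be invoked carefully to guarantee both a genuine cover of $\Lambda_\rho$ by the orbit ellipsoids (which requires $\xi_0$ to be in general position with respect to the repelling loci of the $\rho(\gamma)$) and a uniform overshoot bound on the first stopping time in $\min_{(i,j)}\alpha_{i,j}$. Once this is in place, the ellipsoid estimate and the Falconer bookkeeping above are essentially mechanical.
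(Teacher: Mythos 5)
Your counting step contains a fatal direction error, and behind it lies a conceptual gap that the rest of the argument cannot absorb. For $\gamma \in T_t$ you prove the \emph{lower} bound $F_r^P(\rho(\gamma)) \ge r(t-C_1)$, which gives $e^{-F_r^P(\rho(\gamma))} \le e^{-r(t-C_1)}$ and hence $\sum_{\gamma \in T_t} e^{-F_r^P(\rho(\gamma))} \le |T_t|\,e^{-r(t-C_1)}$ --- the opposite of the first inequality in your display. To deduce $|T_t| \le e^{rC_1}\Phi_\rho^P(r)\,e^{rt}$ from the convergence of $\Phi_\rho^P(r)$ you would need the \emph{upper} bound $F_r^P(\rho(\gamma)) \le r(t-C_1)$, and that is false in general: for $\gamma \in T_t$ only the smallest root $\zeta_1(\gamma) := \min_{(i,j)\in S(P)}\alpha_{i,j}(\log S(\rho(\gamma)))$ is pinned near $t$, while the other roots entering the minimizer of $F_r^P$ can be arbitrarily large multiples of $t$, making $e^{-F_r^P(\rho(\gamma))}$ far smaller than $e^{-rt}$ and therefore useless for bounding the cardinality of $T_t$.

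The deeper issue is that covering each orbit patch by a single ball of its diameter $\asymp e^{-\zeta_1(\gamma)}$ can only ever yield the critical exponent of $\sum_\gamma e^{-s\zeta_1(\gamma)}$; since $F_s^P(a) \ge s\,\zeta_1(a)$ one has $\Phi_\rho^P(s) \le \sum_\gamma e^{-s\zeta_1(\gamma)}$, so that exponent dominates $\dim_F^P(\rho)$ and is in general strictly larger. The Falconer functional is designed to exploit the anisotropy of the images, which your ellipsoid estimate records but your covering never uses. The paper's Proposition \ref{covering} keeps the whole ellipsoid with semi-axes $e^{-\alpha_{i,j}(\log S(\rho(\gamma_R)))}$ and covers it by $\exp\bigl(\sum_{(i,j)\in S(P)}[\zeta - \alpha_{i,j}]^+ + o(R)\bigr)$ balls of radius $e^{-\zeta}$; crucially, the geodesic is stopped not when the smallest root reaches $\log(1/\e)$ but when the $q$-th smallest root $\zeta_q$ does, where $q = \lceil s\rceil$ and $s = \dim_F^P(\rho)+\eta$. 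With that choice $\sum_{(i,j)}[\zeta_q - \alpha_{i,j}]^+ = -F_s^P(\log S(\rho(\gamma_R))) + s\zeta_q$, so each shadow costs $e^{-F_s^P(\log S(\rho(\gamma_R)))}\e^{-s+o(1)}$ balls and summing over shadows gives $N(\Lambda_\rho,\e) \le \Phi_\rho^P(s)\,\e^{-s+o(1)}$. Your first two ingredients (the ellipsoid shape of $\rho(\gamma)B_0$ and the stopped-geodesic cover) are the right start, but the single-ball reduction and the ensuing bookkeeping must be replaced by this subdivision at the correctly chosen scale.
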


The key observation behind theorem \ref{theorema} is that the entropy \( h_\mu\) is realized as the Furstenberg entropy of  the action of \( \rho (\G)\) on the Grassmannian  \( \Gr(p,\R^d)\) for any \( p  \in P.\) This follows from the \(P\)-Anosov property and identification of the Poisson boundary and geometric boundary for \(\mu\)-random walks on \(\Gamma\) (\cite{Kai00}). The inequality in theorem \ref{theorema} is then a consequence of the  inequality between Furstenberg entropy and the sum of the relevant exponents (see \cite{LL23} and section \ref{prooftheorema}).

The fact that 
\begin{equation}\label{LyapleqFalc}  \dim_{LY}^P(\rho_*\mu) \; \leq \; \dim_F^P (G) \end{equation} is classical. We recall the proof in section \ref{proofLyap}.

The converse is due to  Yuxiang Jiao,  Jialun Li, Wenyu Pan and Disheng Xu (\cite{JLPX23}) in the case when the representation is Borel-Anosov.  In \cite{JLPX23}, given \( \e>0,\)  the authors  construct a free semi-group in \(\G\) that is rich enough that the uniform probability measure \( \mu _\e \) on the generators satisfies \(\dim_{LY}(\rho_*\mu _\e) \ge \dim_F(\rho) -\e.\) The complete dominated splitting is used to  control the almost additivity of the singular values.  
Here,  we consider a general subset \(P\) and we assume that the representation \(\rho(\G)\) is Zariski dense in  \( \SL(d,\R)\). In the spirit of  \cite{gouezel}, \cite{random} and \cite{JLPX23}, we prove in the appendix a general proposition about free subsemigroups  in hyperbolic groups (see the appendix for the definition of a quasi-geodesic set; see also \cite{yang} for a construction of free sub-semi-groups with convexity properties in a more general context.)
\begin{proposition}[=Proposition \ref{semigrouplemma}]
 Let \((\Gamma,\S)\) be a non-elementary hyperbolic group with a finite symmetric generator \(\S\).   Then there exists a semigroup \(S \subset \Gamma\), a finite set \(F \subset \Gamma\) and mappings \(L,R:\Gamma \to F\) with the following properties:
 \begin{enumerate}
  \item \(S\) generates \(\Gamma\) as a group,
  \item \(S\) is quasi-geodesic, and
  \item \(L(\gamma)\gamma R(\gamma) \in S\) for all \(\gamma \in \Gamma\).
 \end{enumerate}
\end{proposition}
Then, by Zariski density and \cite{AMS95}, there is a measurable complete splitting and we are  able to control the almost additivity of the singular values.

Finally, the proof of theorem \ref{theoremc} is done in section \ref{prooftheoremc}. Similarly to other  previous works (see \cite{Zha97}, \cite{BCH10}, \cite{PSW21}, \cite{GMT19}, \cite{FS20}), it rests  on covering the limit set in \( \partial \G \)  by shadows and counting the covering numbers of the images of these shadows in   \(\La _\rho \).

\subsection{Related facts} 
For convex cocompact groups in \({\textrm {SO}}(n,1),\) Dennis Sullivan proved in \cite{Sul79} that the Hausdorff dimension and the Minkowski dimension of the limit set coincide with the Poincar\'e exponent of the group (which is the Falconer  dimension in that case). This result can be extended to more general cases if one uses a conformally invariant  metric on the boundary (see \cite{Lin04}, \cite{DK22}). The problem is more delicate for the rotation invariant Riemannian metric (see e.g. \cite{Duf17} for the \({\textrm {SU}}(n,1)\) case).

Following the pioneer work of Falconer (\cite{Fal88}), there are many results comparing Hausdorff dimension and Falconer dimension for  affine IFS. The variational principle (\ref{Lyapunov}) for IFS can be found in \cite{Fal88} and  \cite{K04}. Under the form (\ref{Lyapunov}), it  is due to Ian D. Morris and Pablo Shmerkin (\cite{MS19}) in the case of dimension 2, Ian D. Morris and \c Ca\u{g}ri Sert (\cite{MS21}) in general. 

Given theorems \ref{theoremb} and \ref{theoremc}, a natural step to prove  \( \dim _H (\La_\rho) = \dim_M (\La _\rho) =  \dim _F^P (\rho) \) for \(P\)-Anosov representations is  finding  necessary conditions  on  a given  probability \(\mu \) on \(\G\) and a \( \rho _\ast (\mu)\)-stationary probability measure \(\nu \) on \( \F_{P}\) for having
\begin{equation}\label{dimension} \dim _{loc} \nu \; = \; \dim _{LY}^P (\rho _\ast \mu ) .\end{equation}
Here, \( \dim _{loc} \nu \) is the \(\nu \)-a.e. constant value of \( \lim\limits _{\e \to 0} \frac{\log \nu (B(x,\e))}{\log \e} \) (cf. \cite{LL23}). Relation (\ref{dimension}) for IFS is a famous problem, with many deep contributions (most relevant for us are \cite{Fal88}, \cite{JPS07}, \cite{Hoc14},   \cite{BHR19}, \cite{HR19}, \cite{FS22},  \cite{Rap22}, \cite{MS21}). For random walks on discrete subgroups of \(\SL (d,\R) \), relation (\ref{dimension}) is classical when \( d=2\) (see \cite{HS17} for the general non-discrete case). \cite{LL23a} prove  relation (\ref{dimension})  for the Hitchin component of the representation of a surface group in \( \SL (d,\R).\) In
\cite{LPX23},  Jialun Li, Wenyu Pan and Disheng Xu  prove relation (\ref{dimension})  for \( d=3\) when the representation \(\rho \) is Borel-Anosov and \( \rho (\G)\) is Zariski dense. They also observe that relation (\ref{dimension}) may be  wrong when the Zariski closure of \( \rho (\G) \) is conjugated to \( \SL(2, \R).\) See \cite{DS22} for the case of \( {\textrm {SU}}(2,1).\)

\section*{Acknowledgments}

The authors would like to thank Jairo Bochi, León Carvajales, Jialun Li, Rafael Potrie and Andrés Sambarino for many helpful conversations.

\section{Random walk entropy}

Let \(\G\) be a non-elementary finitely generated word hyperbolic group, \(P \subset \lbrace 1,\ldots, d-1 \rbrace\) with \( M:= \#P \ge 2 \), \(\rho \) a \(P\)-Anosov representation of \(\G\) in \( SL(d,\R) .\) 
We let \(\mathcal{M}\) denote the set of probability measures \(\mu\)  on \(\Gamma\) with finite first moment
\[\sum\limits_{\gamma \in \G} |\gamma| \mu(\gamma) < +\infty,\]
and such that the semi-group \(\Gamma_\mu\) generated by the support of \(\mu\) contains two independent loxodromic elements.

 In this section, we prove that the random walk entropy \(h_\mu \) given by (\ref{rwentropy}) coincides with the Furstenberg entropy (see below) of the stationary measure on the Gromov boundary of \(\Gamma\) and its images on adequate flag spaces.   We also relate the Falconer dimension and the random walk entropy to the growth indicator defined by \cite{Qui02a}.  To conclude the section, we prove corollary \ref{eigenvectors}.

\subsection{Furstenberg entropy}

Recall that a probability measure \(\nu\) on a compact space \(X\) on which \(\Gamma\)-acts continuously is said to be \(\mu\)-stationary where \(\mu\) is a probability measure on \(\Gamma\), if and only if
\[\nu = \sum\limits_{\gamma \in \Gamma}\mu(\gamma)\gamma_*\nu.\]

The Furstenberg entropy of a \(\mu\)-stationary measure is defined as
\[\kappa(\mu,\nu) = \sum\limits_{\gamma \in \Gamma} \mu(\gamma) \int\limits_{X} \log\left(\frac{d\gamma_*\nu}{d\nu}(\g x)\right)d\nu(x),\]
or \(+\infty\) if the Radon-Nikodym derivative in the integral does not exist for some \(\gamma\) in the support of \(\mu\). 

In this subsection we show that for \(\mu \in \M\), the Furstenberg entropy of the natural \(\mu\)-stationary measures on the Gromov boundary \(\partial\Gamma\), the Grasmannian manifolds of \(p\)-dimensional subspaces of \(\R^d\) for \(p \in P\), and the space of partial flags \(\F_P\) all coincide with the random walk entropy \(h_\mu\).

This particular feature of Anosov representations is useful since the Furstenberg entropy is what occurs in the dimension formulas of \cite{LL23}.

\subsubsection{Stationary measure on the Gromov boundary \(\partial \Gamma\)}

\begin{theorem}\label{furstenberg}
For each \(\mu \in \M\) there exists a unique \(\mu\)-stationary measure \(\nu_\mu\) on the Gromov boundary \(\partial \Gamma\) of \(\Gamma\), and its Furstenberg entropy is given by \(\kappa(\mu,\nu_\mu) = h_\mu\).
\end{theorem}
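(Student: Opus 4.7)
The plan is to reduce both assertions to Kaimanovich's theorem (\cite{Kai00}) identifying the Poisson boundary of $(\Gamma,\mu)$ with the Gromov boundary $\partial\Gamma$. Nothing in this statement involves the representation $\rho$ or any flag space: the claim is purely about the $\mu$-random walk on the word-hyperbolic group $\Gamma$, and my strategy is to quote Kaimanovich's identification and combine it with the Kaimanovich--Vershik entropy theorem.

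First I would establish existence and uniqueness of $\nu_\mu$. Existence of some $\mu$-stationary probability measure on $\partial\Gamma$ follows from the Markov--Kakutani fixed-point theorem applied to the convolution operator $\nu \mapsto \mu \ast \nu$ on the (compact convex) set of Borel probability measures on $\partial\Gamma$. For the concrete description needed later, I would consider the random walk $Z_n := g_1 \cdots g_n$ where the $g_i$ are i.i.d.\ of law $\mu$. The finite first moment hypothesis together with Kingman's subadditive ergodic theorem, combined with the fact that $\Gamma_\mu$ contains two independent loxodromic elements (so the drift in the word metric is strictly positive), yields by a standard shadow argument in the hyperbolic Cayley graph that $Z_n$ converges a.s.\ to a point $Z_\infty \in \partial\Gamma$. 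The distribution of $Z_\infty$ is then a $\mu$-stationary measure on $\partial\Gamma$, which I would call $\nu_\mu$. Uniqueness follows from Furstenberg's martingale convergence argument: for any $\mu$-stationary $\nu$ on $\partial\Gamma$, the sequence $(Z_n)_*\nu$ converges a.s.\ to $\delta_{Z_\infty}$ (using boundary proximality of the random walk, which itself is a consequence of hyperbolicity and non-elementarity of $\Gamma_\mu$), so integrating recovers $\nu$ as the law of $Z_\infty$.

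The identity $\kappa(\mu,\nu_\mu) = h_\mu$ is where the Poisson boundary comes in. The Kaimanovich--Vershik maximality principle asserts that for any $(\Gamma,\mu)$-boundary $(X,\nu)$ with $H(\mu) < +\infty$, one has $\kappa(\mu,\nu) \le h_\mu$, with equality if and only if $(X,\nu)$ is a model of the Poisson boundary of $(\Gamma,\mu)$. Kaimanovich's theorem in \cite{Kai00} asserts that, under our hypotheses ($\Gamma$ hyperbolic, $\mu$ with finite first moment, $\Gamma_\mu$ non-elementary), the measured space $(\partial\Gamma,\nu_\mu)$ is indeed a model of the Poisson boundary. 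He proves this by verifying his \emph{ray criterion}: the sample path $(Z_n)$ sublinearly tracks a geodesic ray in the Cayley graph, a property that is immediate from positivity of the drift and Gromov hyperbolicity. Combining these two inputs gives the equality $\kappa(\mu,\nu_\mu) = h_\mu$.

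The main obstacle is the Poisson boundary identification, which we are simply importing from \cite{Kai00}; the CAT$(-K)$ assumption on $(\Gamma,S)$ makes the geometric hypotheses of Kaimanovich's theorem readily available, so once the statement is set up correctly the proof is essentially a matter of quoting the right results and checking the finite first moment and non-elementarity conditions, both of which are built into the definition of $\mathcal{M}$.
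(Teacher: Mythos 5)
Your proposal is correct and follows essentially the same path as the paper: both reduce the statement to Kaimanovich's identification of \((\partial\Gamma,\nu_\mu)\) with the Poisson boundary of \((\Gamma,\mu)\) (\cite[Theorems 7.6, 7.7]{Kai00}) and then invoke the Kaimanovich--Vershik entropy criterion, which the paper phrases as ``conditional entropy given the boundary point vanishes on the Poisson boundary'' (\cite[Corollary 2]{KV83} together with \cite[Theorems 4.3, 4.5]{Kai00}) --- the same fact as your maximality principle. One minor remark: for this particular theorem the \(\textrm{CAT}(-K)\) hypothesis plays no role and only Gromov hyperbolicity of \(\Gamma\) is used, consistent with the standing assumption of the paper's Section 2.
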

\begin{proof}
  If \(\g_{-1},\ldots, \g_{-n},\ldots\) are i.i.d. random elements in \(\Gamma\) with common distribution \(\mu\).  Then by \cite[Theorem 7.6]{Kai00} there exists a random limit point
  \[X = \lim\limits_{n \to +\infty}\g_{-1}\cdots \g_{-n} \in \partial\Gamma,\]
  almost surely, and the distribution \(\nu_\mu\) of \(X\) is the unique \(\mu\)-stationary measure on \(\partial \Gamma\).

  Furthermore, \((\partial \Gamma,\nu_\mu)\) is isomorphic to the Poisson boundary of \((\Gamma,\mu)\) by \cite[Theorem 7.7]{Kai00}.

  The Furstenberg entropy \(\kappa(\mu,\nu_\mu)\) is equal to the difference between \(h_\mu\) and the entropy of the random walk \(\g_1, \g_1\g_2,\ldots\) conditioned on \(X\) (see \cite[Corollary 2]{KV83}).  However, because \((\partial \Gamma,\nu)\) is the Poisson boundary this conditional entropy is zero (\cite[Theorem 4.3 and 4.5]{Kai00}) and therefore \(h_\mu = \kappa(\mu,\nu_\mu)\) as claimed.
\end{proof}

\subsubsection{Boundary maps}

Let \(p \in P \) and \(\gamma \in \Gamma\) be such that \(s_p(\rho(\gamma)) > s_{p+1}(\rho(\gamma))\).

There exists a unique \(p\)-dimensional subspace \(\xi^p(\gamma) \subset \R^d\) on which \(p\)-dimensional volume is most contracted by \(\rho(\gamma)^{-1}\).

We let \(\xi(\gamma) = (\{0\} \subset \xi^{p_1}(\gamma) \subset \ldots \subset  \xi^{p_{M}}(\gamma))\subset \R^d \in \F_P\) where \(P = \lbrace   p_1 < \cdots < p_{M} \rbrace\), whenever all \(\xi^p(\gamma)\) are well defined.

Since \(\rho\) is \(P\)-Anosov this is the case outside of a finite subset of \(\Gamma\).

We refer to \cite[Chapitre 7]{GdlH90} for basic properties of the Gromov compactification of \(\Gamma\).  We briefly recall that \(\Gamma\) with its word metric is a proper geodesic metric space.

The group \(\Gamma\) is word hyperbolic, so there exists \(\delta > 0\) such that for every geodesic triangle each side is contained in the \(\delta\)-neighborhood of the other two.

The Gromov boundary \(\partial \Gamma\) is the set of equivalence classes of word geodesic rays in \(\Gamma\), where two rays are equivalent if they are at bounded Hausdorff distance.

A basis of neigborhoods of  a point \(x \in \partial \Gamma\) in the Gromov compactification \(\overline{\Gamma} = \Gamma \cup \partial \Gamma\) is defined by taking for each \(C > 0\) the set
\[N(x,C) = \lbrace x\rbrace \cup  \lbrace y \in \overline{\Gamma}: \min\limits_{n} |\alpha_n| > C\text{ for all geodesics }\alpha\text{ joining }x\text{ and }y\rbrace.\]

\begin{proposition}\label{boundarymapstheorem}
  The map \(\xi\) defined above extends H\"older continuously to the Gromov boundary \(\partial \Gamma\).  Furthermore, \(\xi^p:\partial\Gamma \to \Gr(p,\R^d)\) is injective for each \(p \in P \setminus \{0, d\}\).
\end{proposition}
\begin{proof} 
  For each \(p \in P,\) the extension of \(\xi ^p \) to a continuous mapping from \(\pp \G\) to  \({\textrm {Gr}}(p ,\R^d )\) (the {\it{Cartan property }}) is due  to \cite{GGKW17} (see \cite{canary}, Proposition 30.3). Notice that if \(p \in P\) then \(\rho\) is also \((d-p)\)-Anosov.  From \cite[Proposition 4.9]{BPS19} one has that if \(x,y \in \partial \Gamma\) are distinct then \(\xi^p(x) \oplus \xi^{d-p}(y) = \R^d\).  Since \(\xi^{\min(p,d-p)}(x) \subset \xi^{\max(p,d-p)}(x)\), this implies, for \( p \not = 0,d,\)  that \(\xi^p(x) \neq \xi^p(y)\) so that \(\xi^p\) is injective as claimed. 
\end{proof}

\subsubsection{Dynamical stationary measures}

Fix \(\mu \in \M\),  let \(\g_{-1},\ldots, \g_{-n},...\) be i.i.d. random elements of \(\Gamma\) with common distribution \(\mu\), \(\chi_1 > \cdots > \chi_N\) be the distinct Lyapunov exponents of \(\mu\), and \(d_1,\ldots, d_N\) their multiplicities.
Let \(A = \lbrace  d_1, d_1+d_2,\ldots, d_1+\cdots + d_{N-1} \rbrace\).
By Oseledets theorem for each \(a \in A\) there is a random limit
\[U_a = \lim\limits_{n \to +\infty} \xi^a(\g_{-1}\cdots \g_{-n}),\]
almost surely.

The collection \(U\) of  \(U_a\) for \(a \in A\) is a random element in the space of flags of signature \(A\).   Its distribution, and the projections of its distribution to coarser partial flag spaces are what were called dynamical stationary measures in \cite{LL23}.   In particular since \(\lambda_p(\rho_*\mu) > \lambda_{p+1}(\rho_*\mu)\) for all \(p \in P\) we have that \(\xi_*\nu_\mu\) is the dynamical stationary measure on \(\F_P\), and \(\xi^p_*\nu_\mu\) is the dynamical stationary measure on the Grasmannian of  \(p\)-dimensional subspaces of \(\R^d\) for each \(p \in P\).

\subsubsection{Furstenberg entropy}

\begin{theorem}\label{entropytheorem}
  For each \(\mu \in \M\) one has \[h_\mu = \kappa(\mu,\xi_*\nu_\mu) = \kappa(\mu,\xi^p_*\nu_\mu),\] for all \(p \in P, p\not = 0,d\).
\end{theorem}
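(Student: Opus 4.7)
The plan is to combine Theorem \ref{furstenberg}, which gives $h_\mu = \kappa(\mu,\nu_\mu)$, with the elementary fact that Furstenberg entropy is preserved under $\Gamma$-equivariant Borel isomorphisms of stationary systems. Thus it suffices to establish that $\xi: \partial\Gamma \to \F_P$ and each $\xi^p: \partial\Gamma \to \Gr(p,\R^d)$ with $p \in P$ are $\Gamma$-equivariant Borel isomorphisms from $(\partial\Gamma,\nu_\mu)$ onto their images equipped with the pushforward measures.

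First, by Theorem \ref{boundarymapstheorem}, both $\xi$ and each $\xi^p$ (for $p \in P \subset \{1,\ldots,d-1\}$) are continuous and $\xi^p$ is injective; injectivity of $\xi$ then follows because its $p$-th coordinate already separates points. As $\partial\Gamma$ is compact Hausdorff, each of these maps is a homeomorphism onto its image, and in particular a Borel isomorphism. The maps are also $\Gamma$-equivariant, i.e. $\xi(\gamma x) = \rho(\gamma)\xi(x)$ and $\xi^p(\gamma x) = \rho(\gamma)\xi^p(x)$: this is the standard equivariance property of Anosov boundary maps, which can be seen first at loxodromic fixed points (where $\xi^p$ of the attracting fixed point of $g \in \Gamma$ must coincide with the attracting fixed point of $\rho(g)$ on $\Gr(p,\R^d)$) and then extended to $\partial\Gamma$ by continuity of the boundary maps together with density of loxodromic fixed points in $\partial\Gamma$.

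It remains to invoke the change-of-variables identity: if $\pi: X \to Y$ is a $\Gamma$-equivariant Borel isomorphism and $\nu$ a probability on $X$, then for each $\gamma$ one has
\[\frac{d\gamma_*(\pi_*\nu)}{d\pi_*\nu}\bigl(\pi(x)\bigr) \; = \; \frac{d\gamma_*\nu}{d\nu}(x) \quad \nu\text{-a.e.},\]
so the substitution $y=\pi(x)$ in the integral defining Furstenberg entropy gives $\kappa(\mu,\pi_*\nu)=\kappa(\mu,\nu)$; in particular the Radon--Nikodym derivatives on one side exist if and only if they do on the other. Taking $\pi=\xi$ or $\pi=\xi^p$ with $\nu=\nu_\mu$ yields the two claimed equalities $h_\mu = \kappa(\mu,\xi_*\nu_\mu) = \kappa(\mu,\xi^p_*\nu_\mu)$. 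I do not anticipate a substantive obstacle: the entire argument is formal once injectivity (Theorem \ref{boundarymapstheorem}) and equivariance of the Anosov boundary maps are recorded.
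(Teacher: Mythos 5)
Your proof is correct and takes the same approach as the paper, which dispatches the statement with the one-line remark that it is immediate from the injectivity of $\xi^p$ given by theorem \ref{boundarymapstheorem}. You have simply written out the standard verifications---equivariance of the boundary maps, compactness of $\partial\Gamma$ giving a homeomorphism onto the image, and invariance of Furstenberg entropy under equivariant Borel isomorphisms---that the paper leaves implicit.
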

\begin{proof}
  This is immediate from the  injectivity of the maps \(\xi^p\) given by proposition \ref{boundarymapstheorem}.
\end{proof}

\subsection{Growth indicator function}
\subsubsection{Growth function and Falconer dimension}

Given \(a \in \aaa^+\) we define the growth indicator at \(a\) by 
\begin{equation}\label{growth}
\psi_\rho(a) =  \| a\| \inf\limits_{\CC \ni a} 
\limsup\limits_{T \to \infty } \frac{1}{T}\log \# \{ \g \in \G, \log \circ S\circ \rho(\g)) \in \CC,  ||\log \circ S\circ \rho(\g) || \leq  T\} \end{equation}
where the infimum is over all  open subcones \(\CC \subset \aaa^+\) that contain \(a\).  

\begin{theorem}[Quint, \cite{Qui02a}]\label{cone}
The growth indicator function \(\psi_\rho\) is concave and \( \lbrace \psi_\rho > 0\rbrace\) is the interior of the limit cone defined by
\[\mathcal{L}_\Gamma = \lim\limits_{t \to +\infty}\frac{1}{t}\log S(\Gamma),\]
where the limit is taken in  the Hausdorff topology on closed sets.
\end{theorem}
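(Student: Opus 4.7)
The plan is to separate the statement into two assertions—concavity of $\psi_\Gamma$ and the identification of $\{\psi_\Gamma > 0\}$ with the interior of $\mathcal{L}_\Gamma$—and reduce both to an almost-additivity property of Cartan projections under multiplication in a Zariski dense discrete subgroup. First I observe that $\psi_\Gamma$ is positively homogeneous of degree one: rescaling $a \mapsto \lambda a$ rescales both the cones containing $a$ and the norm threshold $T$ proportionally, so $\psi_\Gamma(\lambda a) = \lambda \psi_\Gamma(a)$. For a degree-one positively homogeneous function on a convex cone, concavity is equivalent to superadditivity $\psi_\Gamma(a_1 + a_2) \geq \psi_\Gamma(a_1) + \psi_\Gamma(a_2)$, and this is what I target.

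For the support, the inclusion $\{\psi_\Gamma > 0\} \subset \mathcal{L}_\Gamma$ is immediate from the definitions: if $a \notin \mathcal{L}_\Gamma$, then by Hausdorff convergence of $t^{-1}\log S(\rho(\Gamma))$ to $\mathcal{L}_\Gamma$ there exists an open cone $\CC$ around $a$ meeting only finitely many rescaled Cartan projections, so the counting in $\CC$ stays bounded and $\psi_\Gamma(a) = 0$. For $a$ in the interior of $\mathcal{L}_\Gamma$, one must exhibit exponentially many elements with Cartan projection pointing near $a$; this is where Zariski density enters, via Benoist's theorem that the limit cone has non-empty interior and that Zariski dense subgroups produce proximal loxodromic elements with prescribed asymptotic behaviour.

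The crux is superadditivity. Fix open cones $\CC_i$ around $a_i$ and a slightly larger target cone $\CC$ around $a_1 + a_2$. Ideally one would pair $\gamma_1 \in \Gamma_{\CC_1}$ of norm $\leq T_1$ with $\gamma_2 \in \Gamma_{\CC_2}$ of norm $\leq T_2$ and conclude $\gamma_1\gamma_2 \in \Gamma_\CC$ with $\|\log S(\rho(\gamma_1\gamma_2))\| \leq T_1 + T_2 + O(1)$; the obstruction is that Cartan projections are not additive under multiplication, because the $KAK$ decomposition of a product depends on the relative orientation of the right singular frame of $\rho(\gamma_1)$ and the left singular frame of $\rho(\gamma_2)$. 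Following Benoist's scheme, I would fix a loxodromic pivot $h \in \rho(\Gamma)$ whose attracting and repelling flags are transverse to all singular flags of elements with Cartan projection in $\CC_1 \cup \CC_2$, and count instead the triple products $\rho(\gamma_1) h \rho(\gamma_2)$. A transversality estimate should then yield a uniform additive error
\[
\bigl\|\log S(\rho(\gamma_1) h \rho(\gamma_2)) - \log S(\rho(\gamma_1)) - \log S(h) - \log S(\rho(\gamma_2))\bigr\| = O(1)
\]
once $\gamma_i$ have Cartan projections in sufficiently narrow subcones, so all such triple products lie in $\CC$ with norm $\leq T_1 + T_2 + O(1)$. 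Injectivity of the map $(\gamma_1, \gamma_2) \mapsto \gamma_1 h_0 \gamma_2$ (where $h = \rho(h_0)$), up to bounded multiplicity, then turns the product of counts in $\CC_i$ into a lower bound on the count in $\CC$, and taking exponential rates yields $\psi_\Gamma(a_1 + a_2) \geq \psi_\Gamma(a_1) + \psi_\Gamma(a_2)$. The positivity of $\psi_\Gamma$ on the interior of $\mathcal{L}_\Gamma$ then falls out of the same counting: any interior direction is a positive combination of two directions in the limit cone along each of which there exist at least linearly many elements, and superadditivity upgrades this to exponential growth.

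The main obstacle is precisely the pivot/transversality lemma. Uniform control of $\log S(g_1 h g_2) - \log S(g_1) - \log S(h) - \log S(g_2)$ requires tracking how the $K$-components in the $KAK$ decomposition of $g_1$ and $g_2$ rotate the singular flags, together with the production of a single element $h$ whose attracting and repelling flags are simultaneously transverse to all these rotating flags. This rests on Benoist's existence of proximal loxodromic elements with flags in general position, which is where Zariski density of $\rho(\Gamma)$ in $\SL(d,\R)$ is used essentially; the rest of the argument is bookkeeping of exponential counts.
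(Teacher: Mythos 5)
The paper offers no proof of this statement: it is quoted from Quint \cite{Qui02a}, so the only fair comparison is with the argument in that reference. Your outline does follow the Benoist--Quint scheme (degree-one homogeneity reduces concavity to superadditivity, which is obtained by counting products $\gamma_1 h \gamma_2$ with a pivot inserted to restore almost-additivity of Cartan projections), and you correctly identify the transversality lemma as the crux. Two points, however, would break the argument as written. First, no single loxodromic pivot $h$ can be transverse to the singular flags of \emph{all} elements of $\Gamma_{\CC_1}\cup\Gamma_{\CC_2}$: as $\gamma$ varies these flags accumulate on the whole limit set, and transversality to a fixed pair of flags fails along a nontrivial subvariety. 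Benoist's lemma instead supplies a \emph{finite} set $F\subset\Gamma$ and a constant $C$ such that for every pair $(\gamma_1,\gamma_2)$ \emph{some} $f\in F$ satisfies $\|\log S(\rho(\gamma_1 f\gamma_2))-\log S(\rho(\gamma_1))-\log S(\rho(\gamma_2))\|\le C$; pigeonholing over $F$ costs only a bounded factor in the count, so superadditivity survives, but your key lemma must be stated in this form to be true.

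Second, and more seriously, your derivation of strict positivity on the interior of $\mathcal{L}_\Gamma$ does not work. Superadditivity of a nonnegative homogeneous function along two rays yields only $\psi_\Gamma\ge 0$ on their convex hull, never $>0$: from ``at least linearly many elements'' in each direction you get exponential rate $0$, and $0+0=0$. The positivity in Quint's proof comes from a different source: one uses Benoist's description of $\mathcal{L}_\Gamma$ as the closed convex cone generated by the Jordan projections of loxodromic elements, builds a ping-pong (Schottky) free sub-semigroup on at least two generators (high powers $\gamma_1^{n_1},\gamma_2^{n_2},\dots$) inside which Cartan projections are almost additive along words, and then counts the exponentially many words of length $n$ with prescribed letter frequencies approximating the target direction. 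It is the combinatorial entropy of that free semigroup (a multinomial coefficient), not superadditivity, that produces a strictly positive exponential rate in every interior direction. Your reduction of the inclusion $\{\psi_\Gamma>0\}\subset\mathcal{L}_\Gamma$ and the homogeneity/concavity bookkeeping are fine.
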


Recall that we defined the Falconer dimension \(\dim_F^P(\rho)\) as the critical value  of the series
  \(r \mapsto  \sum\limits_{\gamma \in \Gamma}\exp\left(-F^P_r\circ \log\circ S\circ \rho(\gamma)\right).\) We  have (see \cite[Lemma 4.2]{sambarino}, \cite[Lemma III.1.3]{Qui02a}):

\begin{lemma}\label{falconer2}
Assume \( r\ge 0 \) is such that there is \( a \in \aaa^+ \) inside the interior of the limit cone \(\mathcal{L}_\Gamma \) with \( \psi _\rho (a) > F^P_r (a). \) Then, 
\[ r \, \le \, \dim _F^P (\rho ).\]
\end{lemma}

\subsubsection{Proof of (\ref{LyapleqFalc})}\label{proofLyap}

The following inequality is essentially due to Guivarc'h. 

\begin{lemma}[Fundamental inequality]
  For all \(\mu \in \mathcal{M}\) one has
  \[h_\mu \le \psi_{\rho}(\lambda(\rho_*\mu)).\]
\end{lemma}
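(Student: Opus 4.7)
The plan is to sandwich the cardinality of a well-chosen ``typical'' set for $\mu^{\ast n}$ between an upper bound coming from the growth indicator $\psi_\G$ and a lower bound coming from the random walk entropy $h_\mu$.

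Let $\la := \la(\rho_\ast\mu)\in\aaa^+$ and fix $\e>0$. Applying the Furstenberg--Kesten / Kingman subadditive ergodic theorem to $\log\|\wedge^p\rho(\g_1\cdots\g_n)\|$ for $p=1,\ldots,d-1$ (legitimate because the finite first moment of $\mu$ makes these integrable), I obtain the almost sure convergence $\tfrac{1}{n}\log S(\rho(\g_1\cdots\g_n))\to\la$, where $(\g_k)$ are i.i.d.\ of law $\mu$. By the definition (\ref{growth}) of $\psi_\G$, I may choose an open cone $\CC\ni\la$ small enough that for all $T$ sufficiently large
\[\#\{\g\in\G:\log S(\rho(\g))\in\CC,\ \|\log S(\rho(\g))\|\le T\}\le \exp\!\bigl(T(\psi_\G(\la)/\|\la\|+\e)\bigr).\]
Set $A_n:=\{\g\in\G:\log S(\rho(\g))\in n\CC,\ \|\log S(\rho(\g))\|\le n(\|\la\|+\e)\}$. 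The LLN above gives $\mu^{\ast n}(A_n)\to 1$, while the cone estimate applied at $T=n(\|\la\|+\e)$ gives
\[|A_n|\le \exp\!\bigl(n(\|\la\|+\e)(\psi_\G(\la)/\|\la\|+\e)\bigr).\]

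The second ingredient is the asymptotic equipartition property for $\mu$: since $\mu\in\M$ has finite Shannon entropy, the Shannon--McMillan--Breiman theorem for random walks (Derriennic, Kaimanovich) yields, for every $\de>0$,
\[\Pof{\mu^{\ast n}(\g_1\cdots\g_n)\le \exp(-n(h_\mu-\de))}\longrightarrow 1.\]
Let $T_n=\{\g\in\G:\mu^{\ast n}(\g)\le e^{-n(h_\mu-\de)}\}$, so $\mu^{\ast n}(T_n)\to 1$. For $n$ large I then have $\mu^{\ast n}(A_n\cap T_n)\ge 1/2$, so
\[\tfrac12\le\mu^{\ast n}(A_n\cap T_n)\le|A_n\cap T_n|\,e^{-n(h_\mu-\de)}\le|A_n|\,e^{-n(h_\mu-\de)},\]
which produces the lower bound $|A_n|\ge \tfrac12 e^{n(h_\mu-\de)}$.

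Comparing with the upper bound from $\psi_\G$, taking logarithms, dividing by $n$ and sending $n\to\infty$, I arrive at
\[h_\mu-\de\le(\|\la\|+\e)\bigl(\psi_\G(\la)/\|\la\|+\e\bigr),\]
and letting $\de,\e\to 0$ yields $h_\mu\le\psi_\G(\la)$. The only substantial step is the equipartition statement; everything else is direct counting together with the LLN for the Cartan projection and the definition of $\psi_\G$. If one wishes to avoid invoking SMB, the same conclusion can be reached by bounding $H(\mu^{\ast n})$ directly: on $A_n$ one uses $H\le\log|A_n|+\log 2$, and the contribution from $A_n^c$ is absorbed into $o(n)$ by restricting further to the word-length ball of radius $n\cdot\sum_\g|\g|\mu(\g)+o(n)$ (finite first moment $+$ exponential growth of $\G$) and using that $\mu^{\ast n}(A_n^c)\to 0$.
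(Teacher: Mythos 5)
Your argument is correct and is a careful formalization of exactly what the paper does: the paper's one-sentence proof asserts that for large $n$ there are at least $\exp(n(h_\mu-\e))$ elements $\g\in\G$ with $\log S(\rho(\g))$ close to $n\la(\rho_*\mu)$, which is precisely your combination of the Shannon--McMillan--Breiman equipartition bound with the law of large numbers for the Cartan projection, compared against the cone-counting definition of $\psi_\G$. You have simply made explicit the ingredients (SMB for random walks, the Furstenberg--Kesten LLN, and the $\inf$ over cones in the definition of $\psi_\G$) that the paper leaves implicit.
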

\begin{proof} Indeed, by the subadditive ergodic theorem applied to the shift space directing the random walk, for all \( \e >0\) and \(n\) large enough, there are at least \( \exp (n(h_\mu -\e)) \) elements \(\g\)  of \(\G\) with \(\log(S(\rho(\gamma)))\) \(n \e\)-close to \( n\lambda (\rho_*\mu).\) \end{proof}

Recall that, given \(\mu \in \mathcal{M}\) we defined the Lyapunov dimension by
\[\dim_{LY}^P(\rho_\ast \mu) = L_{h_\mu}(\lambda(\rho_*\mu)).\]
Relation  (\ref{LyapleqFalc}) is the following statement 
\[\sup\limits_{\mu \in \mathcal{M}}\dim_{LY}^P(\rho_\ast \mu) \le \dim_F^P(\rho).\].
\begin{proof}
Fix \(\mu \in \mathcal{M}\) and let \(r_0 = \dim_{LY}^P(\rho_\ast \mu) = L_{h_\mu}(\lambda(\rho_*\mu))\).
From the  duality between Lyapunov and Falconer functionals we have
  \(F_{r_0}(\lambda_\mu) = h_\mu .\)
  
  We have  \( h_\mu \le \psi_{\Gamma}(\lambda(\rho_{*}\mu)) \) by the fundamental inequality, i.e. \( F_{r_0}(\la(\rho_*\mu) ) \le \psi _\G (\la(\rho_*\mu)) .\)  Hence,
\( r_0 \le   \dim_F^P(\rho) \) by lemma \ref{falconer2}.
\end{proof}
\subsection{Eigenspace splitting}\label{splitting}

In this subsection, we show that theorem \ref{maintheorem} implies  corollary \ref{eigenvectors}. 

Consider, for \( \g \in \G,\) the \(P\)-decomposition \(  \eta (\rho (\g)) \; = \; E_1(\rho (\g))  \oplus \ldots \oplus E_{M-1} (\rho (\g)) \).
By the \(P\)-Anosov property,   the angles between \( E_1(\rho (\g))  \oplus \ldots \oplus E_{k}(\rho (\g)) \) and \( E_{k+1}(\rho (\g))  \oplus \ldots \oplus E_{M-1}(\rho (\g)) \) are uniformly bounded from below by some positive number for all  \(1\le k \le M-2\) and for all \( \g\) but a finite number.

Recall that we denote, for all \(\g\) but a finite number, \(\xi(\gamma) = \{0\} \subset \xi^{p_1}(\gamma) \subset \ldots \subset  \xi^{p_{M-2}}(\gamma)\subset \R^d  \in \F_P\). In the same way,  \(\xi(\gamma^{-1}) = \{0\} \subset \xi^{d-p_{M-2}}(\gamma^{-1}) \subset \ldots \subset  \xi^{d-p_1}(\gamma^{-1})\subset \R^d  \in \F_{P^\ast}\). It follows from \cite{GGKW17}, lemma 2.26\footnote{The argument in the non-hyperbolic case goes back to  David Ruelle's proof of Oseledets theorem (\cite{Rue79}, cf. \cite{Led84} proposition 3.2 or \cite{Sim15}, pages 141--142 for details).}
 that the unstable flag \(f(\rho (\g)) \in \F_P\), 
\[ f(\rho (\g)) := \{ 0\} \subset E_1(\rho (\g)) \subset \ldots \subset E_1 (\rho (\g)) \oplus \ldots \oplus E_{M-2}(\rho (\g))  \subset \R^d \]
and the stable flag \(f'(\rho (\g)) \in \F_{P^\ast}\),
\[ f'(\rho (\g)) := \{ 0\} \subset E_{M-1}(\rho (\g)) \subset \ldots \subset E_2 (\rho (\g)) \oplus \ldots \oplus E_{M-1}(\rho (\g) ) \subset \R^d \]  are given by \[ f(\rho (\g))  \; =\; \lim\limits _{n\to +\infty} \xi (\g^n), \quad  f'(\rho (\g))  \; =\; \lim\limits _{n \to - \infty} \xi (\g^n) .\]
In particular, \(f (\rho (\g)) \in \La_\rho \subset \F_P, \) and \(f'(\rho (\g))  \in \La_\rho ^\ast, \) where \( \La_\rho ^\ast \) is the limit set associated to the representation \(\rho\) in \( \F_{P^\ast}.\) Thus, \((f,f')\in 
(\La_\rho \x \La_\rho ^\ast)' \subset ( \F_{P} \x  \F_{P^\ast})',\) where \('\) indicates that the pairs of flags are in general position.

The set of pairs \( (f,f') \in (\F_P \x \F_{P^\ast})'\) such that the angles between the opposite partial spaces are bounded from below by some positive number form a compact subset of 
\(( \F_P \x \F_{P^\ast})'\)  and the  mapping that associates to \((f,f')\) the underlying decomposition is uniformly Lipschitz on that compact set. For almost all \( \g \in \G,\) the decomposition \(\eta (\rho (\g))\) is obtained by this mapping from \((f(\rho (\g)) \x f'(\rho (\g))\). Therefore the closure \( \Om _\rho \) of the set of splittings \( \{ \eta (\g), \g\in \G\} \)  is the image of a compact subset of \( (\La_\rho \x \La_\rho ^\ast)' \)  by a Lipschitz mapping and its Minkowski dimension  is at most the Minkowski dimension of  \( \La_\rho \x \La_\rho ^\ast \). Since the Minkowski dimension of a product is the sum of the Minkowski dimensions, corollary \ref{eigenvectors} follows from theorem \ref{maintheorem} applied to the representation \( \rho\) which is  both \(P \)-Anosov  and \( P^\ast\)-Anosov.

\section{Entropy gap and proof of theorem \ref{theorema}}\label{prooftheorema}

In this section we write \(\lambda_i\) for \(\lambda_i(\rho_*\mu)\). The purpose of this section is to show that for all \(\mu \in \M\), the random walk entropy \(h_\mu\) is far from the sum
\[\sum\limits_{(i,j) \in S(P)}\lambda_i - \lambda_j,\]
which would be needed to maximize the value of the Lyapunov dimension \(\dim_{LY}^P(\rho_*\mu)\).

This is the main estimate needed for the proof of theorem \ref{theorema}:
\begin{proposition}\label{mainlemma}
  For each \(\mu \in \M\) it holds that
  \[h_\mu + \frac{M- 1}{2}\left(\lambda_1 - \lambda_d\right) \le \sum\limits_{(i,j) \in S(P)}\lambda_i - \lambda_j.\]
\end{proposition}

\subsection{Proof of theorem \ref{theorema} assuming proposition \ref{mainlemma}}

Given \(\mu \in \M\) by definition we have, setting \( b_{i,j} := 1-a_{i,j},\)
\begin{equation}\label{lyapunovdimensionequation}
\dim_{LY}^P(\rho_*\mu) =  \# S(P) -\min \sum\limits_{(i,j) \in S(P)} b_{i,j},
\end{equation}
where the minimum is over all choices of \(b_{i,j}\) satisfying \(0 \le b_{i,j} \le 1\) and
\[\sum\limits_{(i,j) \in S(P)}b_{i,j} (\lambda_i - \lambda_j)\geq - h_\mu + \sum\limits_{(i,j) \in S(P)}(\lambda_i - \lambda_j) .\]
Since \(\lambda_1 - \lambda_d\) is the largest possible difference \( \lambda _i -\lambda _j,\)  it follows that
\begin{equation}\label{maininequality}
\dim_{LY}^P(\rho_*\mu) \le \# S(P) - \frac{\left(\sum\limits_{(i,j) \in S(P)}\lambda_i - \lambda_j\right) - h_\mu}{\lambda_1-\lambda_d}.\end{equation}

Letting \(P = \lbrace  p_1 < \cdots < p_{M} \rbrace\) one has
\[\# S(P) = p_1(d-p_1) + (p_2-p_1)(d-p_2) + \cdots + (p_{M} - p_{M-1})(d-p_{M}) = \dim(\F_{P}).\]

Substituting this into the inequality (\ref{maininequality}), and using proposition \ref{mainlemma} we obtain
\[\dim_{LY}^P(\rho_*\mu) \le \dim(\F_{P}) - \frac{M - 1}{2},\]
as claimed.

\subsection{Upper bounds for Furstenberg entropy}

In view of theorem \ref{entropytheorem} it will be useful to bound the Furtenberg entropy of the stationary measures \(\xi^p_*\nu_\mu\) for each \(p \in P\).

\begin{lemma}\label{entropyupperboundlemma}
  For each \(\mu \in \M\) and \(p \in P\) one has
  \[h_\mu \le \sum\limits_{(i,j) \in S_p}\lambda_i - \lambda_j.\]
\end{lemma}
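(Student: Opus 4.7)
The plan is to translate the bound on random walk entropy into a bound on a Furstenberg entropy on the Grassmannian \(\Gr(p,\R^d)\), where the relevant quantity can be computed via Lyapunov exponents of the derivative cocycle. Theorem \ref{entropytheorem} already hands us the identification \(h_\mu = \kappa(\mu,\xi^p_*\nu_\mu)\) for every \(p \in P\), so the task collapses to proving
\[\kappa(\mu,\xi^p_*\nu_\mu) \le \sum_{(i,j)\in S_p}(\lambda_i-\lambda_j).\]

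For this I would invoke the random-walk analogue of the Margulis--Ruelle inequality: the Furstenberg entropy of a \(\mu\)-stationary probability measure on a smooth \(\Gamma\)-space is bounded above by the sum of the non-negative Lyapunov exponents of the tangent cocycle. This inequality is one of the main tools developed in \cite{LL23} and is cited precisely for this role in the introduction of the present paper. Granting it, the remaining calculation is linear-algebraic. If \(g \in \SL(d,\R)\) has a strict singular-value gap between positions \(p\) and \(p+1\), then in the basis of right singular vectors of \(g\) the tangent space \(T_{\xi^p(g)}\Gr(p,\R^d) \cong \mathrm{Hom}(\xi^p(g),\xi^p(g)^\perp)\) decomposes into rank-one pieces indexed by the pairs \((i,j)\) with \(i \le p < j\), and on each such piece \(dg\) acts with singular value \(s_i(g)/s_j(g)\). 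Applying Oseledets to the random walk with step distribution \(\rho_*\mu\) then yields Lyapunov exponents of the tangent cocycle equal to \(\lambda_i-\lambda_j\) for \((i,j)\in S_p\), all non-negative by the ordering of the \(\lambda_i\)'s.

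The one subtlety that needs care is that the stationary measure \(\xi^p_*\nu_\mu\) must be concentrated on points at which this derivative computation is meaningful, i.e.\ on trajectories for which the random walk has the required asymptotic gap at position \(p\). This is ensured by the \(P\)-Anosov hypothesis, which forces the strict inequality \(\lambda_p(\rho_*\mu) > \lambda_{p+1}(\rho_*\mu)\) for every \(p \in P\), combined with the identification (recalled earlier in the section) of \(\xi^p_*\nu_\mu\) as the attracting dynamical stationary measure on \(\Gr(p,\R^d)\). Once these points are checked, combining the two displayed inequalities above gives the lemma directly, with no further work. The main potential obstacle is purely bookkeeping: making the invocation of the Furstenberg-entropy upper bound from \cite{LL23} precise in the Grassmannian setting (in particular, verifying the required integrability of the log-Jacobian), but this is a standard consequence of the finite first moment assumption in the definition of \(\M\) combined with the uniform domination coming from the Anosov property.
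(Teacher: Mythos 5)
Your proposal is correct and follows essentially the same route as the paper: identify \(h_\mu\) with the Furstenberg entropy \(\kappa(\mu,\xi^p_*\nu_\mu)\) via Theorem \ref{entropytheorem}, then apply the upper bound from \cite{LL23} (their Theorem 2.1, which is precisely the random-walk Margulis--Ruelle-type estimate you describe), and finally rewrite \(\sum_{l\le k<m} d_l d_m(\chi_l-\chi_m)\) as \(\sum_{(i,j)\in S_p}\lambda_i-\lambda_j\). Your explicit unpacking of the tangent-cocycle singular values on \(\mathrm{Hom}(\xi^p,(\xi^p)^\perp)\) is the content hiding inside the cited theorem; the paper simply quotes the bound and does the index bookkeeping.
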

\begin{proof}
  Let \(\chi_1 > \cdots > \chi_N\) be the distinct Lyapunov exponents and \(d_1,\ldots, d_N\) their multiplicities.   For each \(l = 1,\ldots, N\) let 
\(A_l = \lbrace i: \lambda_i = \chi_l\rbrace\).

Fix \(p \in P\), because \(\rho\) is \(p\)-Anosov one has \(\lambda_{p} > \lambda_{p+1}\) and therefore there exists \(k\) such that \(d_1 + \cdots + d_k = p\).

From \cite[Theorem 2.1]{LL23} the dynamical stationary measure \(\xi^p_*\nu_\mu\) on the Grasmannian of \(p\)-dimensional subspaces has Furstenberg entropy bounded by
\[\kappa(\mu,\xi^p_*\nu_\mu) \le \sum\limits_{l \le k < m}d_l d_m(\chi_l - \chi_m).\]

Noticing that \(\#A_l = d_l\) we obtain
\begin{align*}
  \sum\limits_{l \le k < m}d_l d_m(\chi_l - \chi_m) &= \sum\limits_{l \le k < m}\sum\limits_{i \in A_l,j \in A_m}\lambda_i - \lambda_j
                                                 \\ &= \sum\limits_{i \le p < j}\lambda_i - \lambda_j
                                                 \\ &= \sum\limits_{(i,j) \in S_p}\lambda_i - \lambda_j,
\end{align*}
which concludes the proof.
\end{proof}

\subsection{Proof of proposition \ref{mainlemma}}

Let \(M= \#P\), \(\alpha_k = \lambda_{k} - \lambda_{k+1}\) for each \(k = 1,\ldots, d-1\), and for \(A \subset P\) let \(S(A)\) be the set of \((i,j)\) such that \(1 \le i \le a < j \le d\) for some \(a \in A\).

We write
\begin{align*}
  M \left(\sum\limits_{(i,j) \in S(P)}\lambda_i - \lambda_j\right) &- \sum\limits_{p \in P}\sum\limits_{(i,j) \in S_p}\lambda_i - \lambda_j
                                                                         \\ &= \sum\limits_{k = 1}^{d-1} \left(Ma_k(P) - \sum\limits_{p \in P}b_k(p)\right)\alpha_k,
\end{align*}
where \(a_k(A)\) is the number of \((i,j) \in S(A)\) with \(i \le k < j\), and \(b_k(p) = a_k(\lbrace p\rbrace)\).

Since by lemma \ref{entropyupperboundlemma} 
\[M  h_\mu \le \sum\limits_{p \in P}\sum\limits_{(i,j) \in S_p}\lambda_i - \lambda_j,\]
it suffices to show that for each \(k = 1,\ldots, d-1\) one has
\[M a_k(P) - \sum\limits_{p \in P}b_k(p) \ge \frac{M(M-1)}{2}.\]

For this purpose we fix \(k\) and enumerate \(P = \lbrace  p_1,\ldots, p_{M}  \rbrace\) in such a way that
\[|p_1 -k| \ge |p_2 -k| \ge \cdots \ge |p_{M}-k|,\]
and set \(A_i = \lbrace p_1,\ldots, p_i\rbrace\) for \(i = 1,\ldots,M\).

The desired lower bound will follow from the following two claims:
\begin{enumerate}
  \item For each \(i = 1,\ldots, M\) one has \(a_k(A_i) \ge b_k(p_i)\).
  \item For each \(i = 1,\ldots, M-1\) one has \(a_k(A_{i+1}) \ge a_k(A_i) + 1\). 
\end{enumerate}

Claim (1) is trivial since \(\lbrace p_i\rbrace \subset A_i\). 

To establish claim (2) we first suppose that \(p_{i+1} \le k\).  If this is the case then \((p_{i+1},k+1) \in S(A_{i+1})\).   If \((p_{i+1},k+1) \in S(A_i)\) there would have to be some \(j \le i\) with \(p_{i+1} < p_j < k+1\) contradicting the choice of enumeration of \(P\).  Hence \(a_k(A_{i+1}) \ge a_k(A_i) + 1\) in this case.   Similarly, if \(p_{i+1} > k\) then \((k,p_{i+1}) \in S(A_{i+1}) \setminus S(A_i)\) and claim (2) follows.

To conclude the proof we observe that from claim (1) we have \(a_k(A_1) - b_k(p_1) \ge 0 = \frac{1 \times 0}{2}\).  

Using claims (1) and (2) we show inductively for \(i = 1,\ldots, M-1\) that
\begin{align*}(i+1)a_k(A_{i+1}) - \sum\limits_{j = 1}^{i+1}b_k(p_j) &= a_k(A_{i+1}) - b_k(p_{i+1}) + ia_k(A_{i+1}) - \sum\limits_{j = 1}^i b_k(p_j)
  \\ & \ge i + i a_k(A_i) - \sum\limits_{j = 1}^i b_k(p_j)
  \\ & \ge i + \frac{i(i-1)}{2} = \frac{(i+1)i}{2},
\end{align*}
which concludes the proof setting \(i = M-1\).

\section{Proof of theorem \ref{theoremb}}\label{prooftheoremb}
The inequality (\ref{LyapleqFalc}),  
\(\sup\limits_{\mu \in \mathcal{M}}\dim_{LY}^P(\rho_\ast \mu) \le \dim_F^P(\rho),\)
was proven in section \ref{proofLyap}. 

\subsection{Proof of \(  \dim_F^P(\rho)\le \sup\limits_{\mu \in \mathcal{M}}\dim_{LY}^P(\rho_\ast \mu) \)}
We recall that the limit cone of \(\rho\) is defined as
\[\mathcal{L}_{\rho} = \lim\limits_{T \to +\infty}\frac{1}{T} (\log \circ S \circ \rho) (\Gamma),\]
in the Hausdorff topology.  It is a closed convex cone in \(\aaa^+\).

By the \(P\)-Anosov property if \(a \in \mathcal{L}_{\rho} \setminus \lbrace 0\rbrace\) we obtain
\[\alpha_{i,j}(a) > 0\text{ for all }(i,j) \in S(P).\]

Assume that \(r < r' < \dim_F^P(\rho)\) so that the series (\ref{falconerseries}) diverges at \(r'\).  By compactness there exists \(a \in \mathcal{L}_{\rho}\setminus \lbrace 0\rbrace\) such that for all open cones \(\mathcal{C}\) containing \(a\) we have
\[\sum\limits_{\g \in \G: (\log \circ S \circ \rho)(\gamma) \in \mathcal{C}} \exp (-(F_{r'}^P\circ \log \circ S\circ \rho)(\gamma)) = +\infty.\]

We fix such a choice of \(a\) and notice that \(\psi(a) \ge  F_{r'}^P(a)\) by \cite[Lemma 4.2]{sambarino}. Then,  by lemma \ref{lyapunovvsfalconer}, we have \(\psi(a) > F_{r}^P(a)\).  We will prove:
\begin{proposition}\label{goodmeasureslemma}
There exists a sequence \(T_k \to +\infty\) and \(\mu_k \in \M\) such that \(h_{\mu_k}  =  T_k \psi(a) + o(T_k)\) and \(\lambda(\rho_*\mu_k) = T_k a + o(T_k)\) when \(k \to +\infty\).
\end{proposition}

Since \(\psi(a) > F_r^P(a)\) we have from lemma \ref{lyapunovvsfalconer} that \(L_{\psi(a)}^P(a) > r\).

Assuming proposition \ref{goodmeasureslemma} we obtain
\begin{align*}\dim_{LY}^P(\rho_*\mu_k) &= L_{h_{\mu_k}}^P(\rho_*\mu_k) = L_{T_k\psi(a) + o(T_k)}^P(T_k a + o(T_k))
\\ &= L_{\psi(a) + o(1)}^P(a + o(1)) = L_{\psi(a)}^P(a) + o(1) > r,\end{align*}
for  \(k\) large enough.  Which concludes the proof of the inequality and of  theorem \ref{theoremb}.

\subsection{Proof of proposition \ref{goodmeasureslemma}}

Fix \(\beta:\aaa \to \R\) linear such that \(\beta(a) = F_r^P(a)\), and a decreasing sequence of open cones \(\mathcal{C}_k, k = 1,2,3,\ldots\) whose intersection is \(\R_+ a\).

For a given \(k\) set 
\[\psi_k(a) := \limsup\limits_{T \to +\infty}\frac{1}{T}\log \#\lbrace \gamma \in \Gamma: (\log \circ S\circ \rho)(\gamma) \in \mathcal{C}_k \cap \lbrace \beta \le T \beta(a)\rbrace.\]
We have \( \psi (a) = \lim\limits _{k \to \infty } \psi_k (a) \) and
\[\psi_k(a) = \limsup\limits_{T \to +\infty}\frac{1}{T}\log \#\lbrace \gamma \in \Gamma: (\log \circ S\circ \rho)(\gamma) \in \mathcal{C}_k \cap \lbrace (T-1)\beta(a) \le \beta \le T \beta(a)\rbrace.\]

Hence, we may choose \(T_k \to +\infty\) such that the sets
\[A_k = \left\lbrace \gamma \in \G:  (\log \circ S \circ \rho)(\gamma) \in \mathcal{C}_k \cap \lbrace (T_k-1)\beta(a) \le \beta \le T_k \beta(a)\rbrace\right\rbrace,\]
satisfies
\[\psi(a) = \lim\limits_{k \to +\infty}\frac{1}{T_k}\log \# A_k.\]
Moreover, by the Anosov properties, there are constants \(C_1,C_2\) such that for  \(\g \in A_k, \, C_1T_k \le |\g| \le C_2T_k.\) So there exist \(\ell _k, \ell _k \to \infty \) as \( k \to \infty ,\) such that the sets
\[A'_k = \left\lbrace \gamma \in \G:  (\log \circ S \circ \rho)(\gamma) \in \mathcal{C}_k \cap \lbrace (T_k-1)\beta(a) \le \beta \le T_k \beta(a)\rbrace, |\g| = \ell _k  \right\rbrace,\]
still satisfies
\[\psi(a) = \lim\limits_{k \to +\infty}\frac{1}{T_k}\log \# A'_k.\]

We now fix a semi-group \(S\) of \(\Gamma\), a finite subset \(F\), and maps \(L,R:\Gamma \to F\) given by proposition \ref{semigrouplemma}.  Observe, that \(\rho(S)\) is Zariski dense since the Zariski closure of a semi-group is a group(see \cite{goldsheid-margulis}), and \(S\) generates \(\Gamma\).

Let \(B_k \subset S\) be defined by
\[B_k = \lbrace L(\gamma)\gamma R(\gamma): \gamma \in A'_k\rbrace.\]

Since, \(F\) is finite there exists \(C > 0\) such that
\begin{eqnarray*} \# B_k &\ge& C^{-1} \# A'_k, \\ |  |\g| - \ell _k | &\le & C {\textrm { for }} \g \in B_k \end{eqnarray*}
and fixing some norm on \(\aaa\) we have
\[\|\log\circ S \circ \rho(\gamma) - \log \circ S \circ \rho(L(\gamma)\gamma R(\gamma))\| \le C,\]
for all \(\gamma \in \Gamma\).

Fix a parameter \(\e> 0 \), we say a splitting \(\R^d = E_1 \oplus \cdots \oplus E_d\) into one dimensional subspaces is \(\e\)-non-degenerate if 
the angle between \(E_i\) and \(\bigoplus\limits_{j \neq i}E_j\) is at least \(\e\) for all \(i\).  We say two splittings \((E_1,\ldots, E_d), (E_1',\ldots,E_d')\) are \(\e\)-close if the angle between \(E_i\) and \(E_i'\) is at most \(\e\) for all \(i\).

We say that an element \(g \in \SL(d,\R)\) is \(\e\)-diagonalizable if it has \(d\) distinct real eigenvalues \(|\lambda_1| > \cdots > |\lambda_d|\) satisfying \(|\lambda_i|/|\lambda_{i+1}| > e^{\e}\) for \(i = 1,\ldots, d-1\), and the corresponding splitting into eigenspaces is \(\e\)-non-degenerate.

Since \(\rho(S)\) is Zariski dense we obtain from \cite[Theorem 6.8]{AMS95}:
\begin{lemma}[Abels-Margulis-Soifer]\label{abslemma}
There exists \(\e_0 > 0\) and a finite set \(S_0 \subset S\) such that for all \(\gamma \in \Gamma\) the set \(\rho(\gamma S_0)\) contains at least one \(\e_0\)-diagonalizable element.
\end{lemma}

For what follows we fix \(S_0\) and \(\e_0 > 0\) given by the previous lemma. We also fix \(\delta_0 > 0\) given by the following:
\begin{lemma}
  There exists \(\delta_0 > 0\) such that if \(B \subset \SL(d,\R)\) is a set of \(\e_0\)-diagonalizable elements whose eigenspace splittings are \(\delta_0\)-close then
  \(\gamma_1\cdots \gamma_n\) is \(\e_0/2\)-diagonalizable for all \(n\) and \(\gamma_1,\ldots, \gamma_n \in B\).
\end{lemma}
\begin{proof}
This follows directly from \cite[Lemma 2.17]{2021breuillard-sert}.
\end{proof}

We now refine the family \(B_k\) so as to obtain images under \(\rho\) which are diagonalizable with nearby splittings.
\begin{lemma}\label{bklemma}
 There exists a decreasing positive sequence \(\epsilon_k \downarrow 0\) and a sequence \( \ell '_k \uparrow \infty \) such that for all \(k\) large there exists \(C_k \subset B_k\cdot S_0\) with the following properties:
  \begin{enumerate}
    \item \(C_k\) freely generates a free semi-group in \(\Gamma\).
    \item For all \(\gamma \in C_k\) the element \(\rho(\gamma)\) is \(\e_0\)-diagonalizable.
    \item For all \(\gamma_1,\gamma_2 \in C_k\) the eigenspace splittings of \(\rho(\gamma_1)\) and \(\rho(\gamma_2)\) are \(\delta_0\)-close.
    \item One has \(\psi(a) = \lim\limits_{k\to +\infty}\frac{1}{T_k}\log \# C_k\).
    \item \(    |\g| = \ell '_k \) for  \( \g \in C_k .\) 
    \item For all \(\gamma \in C_k\) one has \(\|(\log\circ S\circ \rho)(\gamma) - T_k a\| \le \epsilon_k T_k\).
  \end{enumerate}
\end{lemma}
\begin{proof}
The space of \(\e_0\)-non-degenerate splittings is compact and hence we may cover it by some finite number \(N\) of subsets with the property that if two splittings belong to the same subset they are \(\delta_0\)-close.

From lemma \ref{abslemma} it follows that for all \(\gamma \in B_k\) there exists \(f_\gamma \in S_0\) such that \(\rho(\gamma f_\gamma)\) is \(\e_0\)-diagonalizable.  By the pigeonhole principle we may choose a set \(C'_k\) of at least \(\# B_k /N\) of the elements \(\gamma f_{\gamma}\) such that the eigenspace splitting of \(\rho\) applied to any two are \(\delta_0\)-close.

Hence, we have constructed \(C'_k\) with properties (2), (3) and (4) above.

Since \(F \cup S_0\) is finite there exists \(C > 0\) such that, for all \(k\) and \(\gamma \in A_k\),
\begin{eqnarray*} | |L(\gamma)\gamma R(\gamma) f_{L(\gamma)\gamma R(\gamma)}| - \ell _k | &\le &C  \; {\textrm { and }}\\
\|(\log\circ S\circ \rho)(L(\gamma)\gamma R(\gamma) f_{L(\gamma)\gamma R(\gamma)}) - (\log\circ S \circ \rho)(\gamma)\| &\le &C.\end{eqnarray*}  

By the pigeonhole principle again, we can find \(\ell' _k , | \ell '_k - \ell _k | \le C,\) such that  \(  C''_k : = \g \in C'_k, |\g | = \ell '_k  \)  satisfies properties (2), (3), (4) and (5). Moreover, 
if \(\gamma \in A_k\) then by definition we have
\[(\frac{1}{T_k}(\log\circ S \circ \rho)(\gamma)- a) \in \mathcal{C}_k \cap \lbrace \frac{T_k - 1}{T_k}\beta(a) \le \beta \le \beta(a)\rbrace,\]
hence setting \(\epsilon_k\) to be \(C/T_k\) plus the diameter of the set on the right-hand side, we have
\[\|(\log\circ S\circ \rho)(L(\gamma)\gamma R(\gamma) f_\gamma) - T_k a\| \le \epsilon_k T_k,\]
for all \(\gamma \in A_k\), which establishes property (6) for \(C'_k\) and \(C''_k.\) 

To establish (1) we notice that \(S\) is a quasi-geodesic semi-group. Therefore,  by Morse lemma,  there is some \(R > 0\) such that if \(\gamma_1,\ldots, \gamma_n, \eta_1,\ldots,\eta_n \in S\) and \(\gamma_1\cdots \gamma_n = \eta_1\cdots \eta_n\), then there is a geodesic ray \(\alpha = \{\alpha _n \}_{ n \ge 0},\) such that for all \( j \in [0, n],\)  \begin{equation}\label{tracking}\dist(\gamma_1\cdots \gamma_j, \alpha ), \, \dist (\eta_1\cdots \eta_j, \alpha ) < R.\end{equation}

We claim that if we assume that the elements of \( C_k \subset C''_k \) are \(6R\)-separated, equation (\ref{tracking}) forces \( \g _j = \eta _j \) for all \(  j \in [0, n].\)
Hence it suffices to refine \(C''_k\) so that it is a \(6R\)-separated set to obtain that it freely generates a free sub-semi-group of \(\Gamma\).   Doing this decreases the number of elements at most by a factor equal to the number of elements in a ball of radius \(6R\) in \(\Gamma\), so  property (4) still holds.

To prove the claim, we prove by induction on \(j\) that \( \g _i = \eta _i \) for \( i \le j.\) We have  \( \g _0 = \eta _0 = e.\) Assume that \( \g _i = \eta _i \) for \( i \le j.\) Then 
 \(\gamma_1\cdots \gamma_j = \eta_1\cdots \eta_j\) and there are \( N_1, N_2\) and \(N_3\) such that 
 \[ \dist (\gamma_1\cdots \gamma_j, \alpha _{N_1} ),\;\dist (\gamma_1\cdots \gamma_j\g_{j+1}, \alpha _{N_2} ) {\textrm { and }}\dist (\gamma_1\cdots \gamma_j\eta_{j+1}  , \alpha _{N_3} ) \le R.\]
 Setting \( \beta _t :=  (\gamma_1\cdots \gamma_j)^{-1} \alpha _{N_t} \) for \( t = 1,2,3,\) we have \(\beta _{N_1}, \beta _{N_2}\) and \( \beta _{N_3} \) aligned on the same geodesic and satisfying:
 \[ |\beta _1|, \dist (\g_{j+1}, \beta _2), \dist (\eta_{j+1}, \beta _3) < R \, {\textrm { and }} \, |\g_{j+1}|= |\eta _{j+1}| = \ell'_k .\]
 If \(\ell'_k\) is large enough, \( \beta _1 \) is outside the interval \( [\beta _2, \beta _3]\)  (see lemma \ref{boundeddistanelemma}) and \[ \dist (\beta _1, \beta _2), \dist (\beta _1, \beta _3) \; \in \; [\ell'_k -2R, \ell'_k +2R ].\]
 It follows that \(\dist (\beta _2, \beta _3) \le 4R\) and therefore that \(\dist (\alpha _{j+1} , \eta _{j+1} ) < 6R.\) Since \(C_k\) is \(6R\)-separated, \( \alpha _{j+1} = \eta _{j+1}\) as claimed.
\end{proof}

\begin{corollary}
If \(\mu_k\) is the uniform probability measure on \(C_k\) then one has 
\[\lim\limits_{k \to +\infty}\frac{1}{T_k}h_{\mu_k} = \psi(a).\]
\end{corollary}
\begin{proof}
 This follows immediately from properties (1) and (4) of the previous lemma.
\end{proof}

We now use the fact that for \(\e_0\)-diagonalizable elements with close splittings the singular values and eigenvalues are close and they almost multiply under composition.
\begin{lemma}
  There exists a decreasing positive sequence \(\epsilon_k' \downarrow 0\) such that for all \(k\) large enough, all \(n\) and \(\gamma_1,\ldots, \gamma_n \in C_k\) one has
\[\|\frac{1}{n}(\log\circ S \circ \rho)(\gamma_1\cdots \gamma_n) - T_k a\| \le \epsilon_k' T_k.\]
\end{lemma}
\begin{proof}
  Let \(L(g)\) denote the vector of absolute values of eigenvalues of a diagonalizable element \(g \in \SL(d,\R)\) in decreasing order.

  By \cite[Lemma 2.16]{2021breuillard-sert} there exists \(C > 0\) such that for all \(\e_0/2\)-diagonalizable \(g \in \SL(d,\R)\) one has
  \[\|\log\circ S(g) - \log\circ L(g)\| \le C.\]

  By \cite[Proposition 2.17]{2021breuillard-sert} there exists \(C > 0\) independent of \(k\) and \(n\) such that we have
  \[\| (\log \circ L \circ \rho)(\gamma_1\cdots \gamma_n) - \sum\limits_{i = 1}^n (\log \circ L \circ \rho)(\gamma_i)\| \le Cn.\]

  Combining this with lemma \ref{bklemma} we obtain
  \begin{align*}\|(\log\circ S \circ \rho)(\gamma_1 \cdots \gamma_n) - n T_k a\| &\le C + \|(\log \circ L \circ \rho)(\gamma_1\cdots \gamma_n) - nT_k a\|
    \\ &\le  C(n+1) + \|\sum\limits_{i = 1}^n(\log \circ L \circ \rho)(\gamma_i) - nT_k a\|
    \\ &\le C(2n+1) + \|\sum\limits_{i = 1}^n(\log \circ S \circ \rho)(\gamma_i) - nT_k a\|
    \\ &\le C(2n+1) + n \epsilon_k T_k.
  \end{align*}

  This proves the desired result with \(\epsilon_k' = 3C / T_k + \epsilon_k\).
\end{proof}

The following immediate corollary concludes the proof of proposition \ref{goodmeasureslemma}.

\begin{corollary}
For any sequence of probability measures \(\mu_k\) with \(\mu_k(C_k) = 1\) one has 
\[\lim\limits_{k \to +\infty}\frac{1}{T_k}\lambda(\rho_*\mu_k) = a.\]
\end{corollary}

\section{Covering by balls  and proof of theorem \ref{theoremc}}\label{prooftheoremc}

Fix \( \eta >0\) and set \( s:= \dim _F^P(\rho) + \eta .\) We may assume \( s <\dim (\F _{P}) ,\) otherwise \( \dim _M (\La _\rho ) \leq \dim_F^P(\rho) + \eta \) holds trivially. Since \( s > \dim _F^P( \rho),\) the series 
\( \Phi ^P_\rho (s) := \sum\limits_{\gamma \in \Gamma}\varphi_s^P(\rho(\gamma))\) converges. We are going to construct  covers \( \U\) of \( \La _\rho \) by balls of arbitrarily small radius \( \e\) with less than \(\Phi ^P_\rho(s) \e^{-s+o(\e)}\) elements.  This shows that the ratio \(N(\La _\rho, \e)/\e^{-s +o(\e)} \)  is bounded from above by \( \Phi ^P_\rho (s) \) uniformly in \(\e\). Therefore, \( \dim _M (\La _\rho) \leq s =\dim _F^P (\rho)+\eta\) for all positive \( \eta \) and theorem \ref{theoremc} follows.

\subsection{Shadows and Anosov representations}

 Recall that \(\G\) is a hyperbolic group and that we  chose  the generating set \( \S\) to be symmetric. The distance  \( d(\g,\g') \) on \(\G\) is given by the word length  of  \(\g ^{-1} \g'.\) A {\it{ geodesic }} \( \s = \{\g_n\}_{n \in \Z} \) in \( \G\) is a sequence   such that for all \(( i ,j) , d(\g_i, \g_j ) =| j-i|.\) Any point \( x \in \pp \G\) is the limit point of (at least) one geodesic ray \( \s =\{ \g_j \}_{j \geq 0} \) with \( \g_0 = e.\) 
 
 For a  geodesic ray \( \s =\{ \g_j \}_{j \geq 0} \) with \( \g_0 = e,\) we call the \(R\)-shadow of the geodesic ray the image by \(\xi \) of the set of  limit points of  geodesic rays \(\s ' = \{ \g'_j \}_{j \geq 0} \) satisfying  \( \g'_j = \g_j \) for  \( j\leq R.\) By definition, the \(R\)-shadow of a geodesic ray is a subset of \(\F_{P}.\)

For \((i,j) \in S(P),\g \in \G,\) write \( \zeta _{i,j}(\g): = \log(s_i(\rho(\gamma))/s_j(\rho(\gamma))).\) 
The main step for the proof of theorem \ref{theoremc} is the following proposition:
\begin{proposition}\label{covering} For each  geodesic ray \( \s =\{ \g_j \}_{j \geq 0} \) with \( \g_0 = e,\)  all \( \zeta >0\), the number of balls of radius \( \exp (-\zeta)  \) in \( \F_{P}\) needed to cover the \(R\)-shadow of the geodesic ray is at most \[ \exp \left( \sum _{(i,j) \in S(P)} [  \zeta - \zeta _{i,j}(\g_R) ]^+ + o(R) \right),\] where, for a real \( \varpi,\)  \( \varpi^+ = \max \{ \varpi, 0\} .\) \end{proposition}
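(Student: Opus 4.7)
The plan is to realize the $R$-shadow as the $\rho(\gamma_R)$-image of a uniformly transverse subset of $\F_P$, and then to use the Cartan decomposition of $\rho(\gamma_R)$ together with suitable coordinates on $\F_P$ to exhibit this image as an ``ellipsoid'' centered at $\xi(\gamma_R)$ with semi-axis of order $\exp(-\zeta_{i,j}(\gamma_R))$ in the direction indexed by $(i,j)\in S(P)$. Covering such an ellipsoid by balls of radius $\exp(-\zeta)$ then takes of the order of $\exp\bigl(\sum_{(i,j)}[\zeta-\zeta_{i,j}(\gamma_R)]^+\bigr)$ balls, which is the bound claimed.

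First I would observe that, by equivariance of $\xi$, the $R$-shadow equals $\rho(\gamma_R)(\xi(F_R))$, where $F_R\subset\partial\G$ is the set of endpoints of geodesic rays from $e$ which, extended backwards by $R$ steps, reach $\gamma_R^{-1}$. For any such endpoint $y$, $\delta$-hyperbolicity of the Cayley graph gives $(y\mid\gamma_R^{-1})_e\le C$ with $C$ independent of $R$. Combined with the H\"older estimate for $\xi$ from Theorem \ref{boundarymapstheorem}, the transversality statement in \cite[Proposition~4.9]{BPS19}, and the Cartan property relating $\xi(\gamma_R^{-1})$ to $\xi(y)$ for $y$ close to the ``direction of $\gamma_R^{-1}$'', this forces $\xi(F_R)$ to sit inside a compact subset of the open Schubert cell of $\F_P$ opposite to the stable flag $\xi^{P^\ast}(\gamma_R^{-1})\in\F_{P^\ast}$ of $\rho(\gamma_R)$, with transversality constants uniform in $R$.

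Next I would write the Cartan decomposition $\rho(\gamma_R)=k_R a_R \ell_R$ with $k_R,\ell_R\in O(d)$ and $a_R=\diag(s_1(\rho(\gamma_R)),\dots,s_d(\rho(\gamma_R)))$. Use coordinates on $\F_P$ around the standard partial flag of signature $P$ given by the opposite unipotent radical $N_P^-\cong\bigoplus_{(i,j)\in S(P)}\R$ (off-diagonal entries of a strictly block-lower-triangular matrix for the partition determined by $P$). In these coordinates $a_R$ is linear and diagonal, sending the $(i,j)$-coordinate to $(s_j/s_i)x_{(i,j)}=\exp(-\zeta_{i,j}(\gamma_R))x_{(i,j)}$. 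The first step shows that $\ell_R\xi(F_R)$ sits in a region of diameter $O(1)$ in $N_P^-$, so its $a_R$-image is contained in an ellipsoid with semi-axis of order $\exp(-\zeta_{i,j}(\gamma_R))$ in direction $(i,j)$; applying the isometry $k_R$ transfers this to an analogous ellipsoid around $\xi(\gamma_R)=k_R\cdot e_P$. Since the Riemannian metric on $\F_P$ decomposes, up to bounded distortion on any fixed chart, as a sum of block norms indexed by $S(P)$, covering this ellipsoid by balls of radius $\exp(-\zeta)$ requires at most $\prod_{(i,j)}\max(1,C\exp(\zeta-\zeta_{i,j}(\gamma_R)))$ balls, hence at most $\exp\bigl(\sum_{(i,j)\in S(P)}[\zeta-\zeta_{i,j}(\gamma_R)]^+ + O(1)\bigr)$, which is absorbed into the $\exp(o(R))$ error.

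The main obstacle is making the ``diagonal linearization'' of $a_R$ on $\F_P$ quantitative and uniform in $R$: one needs that $\ell_R\xi(F_R)\subset N_P^-$ has diameter growing at most sub-exponentially in $R$, and that the nonlinear corrections to the linear action of $a_R$ on the chart contribute only an $\exp(o(R))$ multiplicative error when passing between chart coordinates and the Riemannian metric on $\F_P$. Both reduce to the uniform transversality established in the first step, itself a consequence of the $P$-Anosov property together with the H\"older regularity of the boundary map. Once that uniformity is in hand, the block structure of the tangent space at $\xi(\gamma_R)$ and the explicit diagonal action of $a_R$ on $N_P^-$ make the covering count routine.
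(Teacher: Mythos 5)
Your proposal is correct and follows essentially the same route as the paper: the paper pulls the shadow back by $\gamma_R^{-1}$ into a uniformly bounded ball in the linearizing chart $\varphi_V$ adapted to the singular-value splitting of $\rho(\gamma_R)$ (their Lemmas \ref{canonicalsplitting} and \ref{angle}, resting on the H\"older/transversality properties of $\xi$), then uses the fact that the action in these coordinates is exactly $f\mapsto \rho(\gamma_R)f\rho(\gamma_R)^{-1}$, an ellipsoid with axes $\exp(-\zeta_{i,j}(\gamma_R))$, and counts balls exactly as you do. Your $N_P^-$-coordinates are the same object as the paper's $\nil(V)\cong\bigoplus\mathrm{Hom}(V_i,V_j)$, and your worry about nonlinear corrections evaporates because in these adapted coordinates the action is exactly linear and the chart is uniformly bilipschitz on the transverse compact set.
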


\subsection{Proof of Theorem \ref{theoremc} assuming proposition \ref{covering}}

  Fix \( \e >0 \) small. We need  to cover \(\La_\rho \) by well-chosen shadows and then cover these shadows by \(\e\)-balls.

For \( \g \in \G,\) we write the \( \zeta _{i,j}(\g) , (i,j) \in S(P) ,\) in nondecreasing order as  \( 0< \zeta _1(\g) \leq \zeta_2 (\g) <\ldots \leq \zeta _{\# S(P)} (\g).\) For \(\eta >0,\) write \( s:= \dim _F^P (\rho) + \eta .\) We may assume \( s<\#S(P) \) and let \( q\) be  a positive integer such  that \( q-1 \leq s \leq q .\) For any geodesic ray \( \s =\{ \g_j \}_{j \geq 0}, \)  we will use proposition \ref{covering} with \( \zeta = \zeta _q(\g_R) \) to estimate the \(\e\)-covering number of its \(R\)-shadow.

Let \( \s =\{ \g_j \}_{j \geq 0} \) be a geodesic ray such that \( \g_0(x) = e.\) Then, the sequence \( \{\zeta _q (\g_n)\}_{n\geq 0} \) diverges to infinity, has bounded gaps and there are   \( C,K >0 \) such that \(\zeta_q (\g_{n+K}) - \zeta _q (\g_n) >C\) (see \cite[theorem 1.3]{GGKW17}). It follows that for any chosen geodesic ray \( \s \), there is a well defined smallest index \( n(\rho,\s)\) and \( C>1\) such that 
\begin{equation}\label{choiceofR} \log (1/\e) \; \leq  \; \zeta_q((\g_{n(\rho,\s)}))  \; < \;  \log(1/\e) +C.\end{equation}

 By proposition \ref{covering} applied with \(R = n(\rho, \s)  \) and  \( \zeta = \zeta _q(\g_{n(\rho,\s)}) \),  we can cover the \(R\)-shadow of \( \g_{R} \) with less than 
 \[ \exp \left(  \sum _{(i,j) \in S(P)} [  \zeta_q(\g_R)- \zeta _{i,j}(\g_R) ]^+  + o(\log (1/\e)) \right)\] 
 balls of radius \( \e.\) We claim  that 
\begin{equation*}   \sum _{(i,j) \in S(P)} [  \zeta _q(\g_R)- \zeta _{i,j}(\g_R) ]^+  \;\leq \; - \min \sum\limits_{(i,j) \in S(P)}c_{i,j}\zeta_{i,j}(\g_R)
 +s  \log(1/\e) +C, \end{equation*}  where  the minimum is over \(0 \le c_{i,j} \le 1\) with \(\sum c_{i,j} = s\).
 
 Indeed, since we have ordered the values \(\zeta _k(\g_R)\) in nondecreasing order, the above minimum is attained  for 
 \[ c_k = 1 {\textrm{ for }} k < q, \; c_q = s-q+1, \;  c_k = 0 {\textrm{ for }} k >q .\]
 With that choice of \( c_k\)'s, we have 
  \[  \sum _{(i,j) \in S(P)} [  \zeta _q(\g _R) - \zeta _{i,j}(\g_R) ]^+ = -\left(\sum _k c_k \zeta _k(\g_{R})  +c_q \zeta_q(\g_{R})\right) +s \zeta_q(\g_{R})\] 
  and the claim follows from (\ref{choiceofR}).
  
 Cover now \( \La_\rho\) by  \(n(\rho,\s)\)-shadows of distinct \( \g_{n(\rho,\s)} \). As announced, this proves that   \[ N( \La _\rho, \e) \; \leq \;  \sum\limits_{\gamma \in \Gamma} \exp (-(F_s^P\circ\log\circ S \circ \rho)(\gamma)) \e^{-s+o(\e)} =  \Phi ^P_\rho (s)\e^{-s+o(\e)} .\]

\subsection{Geometry of \(\F_{P}\)}

We recall the description in \cite{LL23} of the geometric structure of the successive Lipschitz foliations by Euclidean spaces on \( \F_{P}.\) Write \( P = \{  p_1, \ldots, p_{M} \} \). Recall that, by convention, \( p_{M+1} = d.\)

Recall that a  topology on \( \{ 1\ldots, M +1\}\) is called admissible if the subsets \( \{ i, i+1,\ldots, M+1\}\) are open. An admissible topology is described by its atoms \(T(i)\), where \(T(i)\) is the smallest open set containing \( \{i\} .\) We write \(T_0\) for the topology with atoms \( T(i) =\{ i,i+1, \ldots, M+1\}\), \(T_P\) for the topology with atoms \( T(i) =\{ i\}\). An admissible topology \(T\) is finer than another one \(T'\) (denoted \( T \prec T'\)) if any \(T' \)-open set is \(T\)-open. By definition, any admissible topology is finer that \(T_0\). 

Given an admissible topology \(T\), we define the (weighted) configuration space \(\X_T\) (with weights \(p_1,p_2-p_1,\ldots,d-p_{M}\)) as the space of sequences \((x_I)_{I \in T}\) such that
\begin{enumerate}
 \item \(x_I\) is a \(\sum _{i\in I}(p_i -p_{i-1})\)-dimensional subspace of \(\R^d\) for each \(I \in T\),
 \item \(x_{I \cup J} = x_I + x_J\) for all \(I,J \in T\), and
 \item \(x_{I\cap J} = x_I \cap x_J\) for all \(I,J \in T\).
\end{enumerate}

Each configuration space \(\X_T\) is endowed with the distance corresponding to its natural embedding in the product of Grassmannian manifolds.

For \( T \prec T',\) there is a natural projection \(\pi_{T,T'} : \X_T \to \X_{T'} \). The space \( \X_{T_P} \) is identified with the pairs in \( (\F_{P}, \F_{P^\ast} )\) in general position. In particular, given \(y \in \F_{P^\ast} \) the projection  \( \pi _{T_P, T_{0}} \) is of the form  \( \pi ^y \x \id,\) where  \( \pi ^y \) is the natural projection from the set of flags in \(\F_{P}\) in general position with \(y\) to  a point.  The fibers \( ( \pi ^y )^{-1} (y)\) are \( \# S(P) \)-dimensional  open subsets of \(\F_{P}.\) 

 We say a sequence of subspaces \(V = (V_1,\ldots, V_{M+1})\) is a splitting compatible with \(y \in \F_{P^\ast} \) if for all \(i, 1\leq i \leq M+1,\) 
\begin{equation}\label{compatiblesplittingequation}
  y_{\{j: j\geq {i}\}} = \bigoplus\limits_{\{j: j\ge i\}}V_j.
\end{equation}

Notice that in particular this implies \(\dim(V_i) = p_i - p_{i-1}\) for all \(i\) and \(\R^d = \bigoplus\limits_{i = 1}^{M+1} V_i\).
In \cite{LL23} lemma 4.1,  we show that setting
\begin{equation}\label{perpcompatiblesplitting}
V_i(y) = y_{\{j: j\geq {i}\} \cap \left(y_{\{j: j>i\}} \right)^{\perp}},
\end{equation}
 yields a compatible splitting for each \(y \in \F_{P^\ast} \) that we call the perpendicular splitting compatible with \(y\).

 Given \(y \in \F_{P^\ast} \) and \(V\) a splitting compatible with \(y\), we denote by \(\nil (V)\) the space of linear mapping \(f:\R^d \to \R^d\) such that
\begin{equation}\label{defineVequation}
 f\left(V_{M+1}\right)  \;= \{0\} \; {\textrm { and }} \;  f\left(V_i\right) \subset \bigoplus\limits_{j : j>i } V_j,
\end{equation}
for \(i = 1,\ldots,M\).  We have \( \dim \nil(V) = \# S(P ).\)  Given \(y\in \F_{P^\ast} \) and a compatible splitting \(V\) we define a mapping
\[\varphi_V: \nil(V) \to \F^{y}\]
by setting
\begin{equation}\label{lineartoconfiguration}
  \varphi_{V}(f)_I = \left(\id + f\right)\left(\bigoplus\limits_{i \in I}V_i(y)\right),
\end{equation}
for all \(I \in T_P\), where \(\id:\R^d \to \R^d\) is the identity mapping.

For each \(y \in \F_{P^\ast} \)  we consider the perpendicular compatible splitting \(V(y) = (V_1(y),\ldots, V_{M+1}(y))\) and we define
\begin{equation}\label{vectorbundleequation}
  \V = \lbrace (y,f):  y \in \F_{P^\ast} ,   f \in \nil(V)\rbrace.
\end{equation}

This is a vector bundle with base \(\F_{P^\ast} \) given by the projection onto the first coordinate.  It is a sub-bundle of the product \(\F_{P^\ast}  \times \text{Hom}(\R^d,\R^d)\).  We endow it with the metric given by  the sum of the distance in \(\F_{P^\ast} \) and the Hilbert-Schmidt norm  on \(\text{Hom}(\R^d,\R^d)\) associated with the Euclidean structures on \(V_i, V_j.\) We have

\begin{theorem}\label{fiberbundle}The mapping  \(\varphi :\V \to \F_{P} \) defined by
\[\varphi (y,f) = \varphi_{V(y)}(f),\]
is a locally bilipschitz homeomorphism. \end{theorem}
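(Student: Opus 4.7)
The plan is to prove the theorem in two stages. First, for each fixed $y \in \F_{P^\ast}$, I show that $\varphi_{V(y)}: \nil(V(y)) \to \F_P$ is a smooth diffeomorphism onto the open subset $\F^y \subset \F_P$ consisting of flags in general position with $y$. Then, using that the perpendicular splitting $V(y)$ of~\eqref{perpcompatiblesplitting} depends real-analytically on $y$, I upgrade this to a local bilipschitz statement for the total map $\varphi$ by a compactness argument on $\V$.

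For the first stage, fix $y$ and the perpendicular splitting $V(y)$. The decomposition $\R^d = \bigoplus_j V_j(y)$ together with $y_{\{j > i\}} = \bigoplus_{j > i} V_j(y)$ shows that $\bigoplus_{j \le i} V_j(y)$ is complementary to $y_{\{j > i\}}$. Any flag $x \in \F^y$ has its $p_i$-dimensional component $x_{\{j \le i\}}$ also complementary to $y_{\{j > i\}}$, and is therefore uniquely expressible as the graph of a linear map $g_i: \bigoplus_{j \le i} V_j(y) \to \bigoplus_{j > i} V_j(y)$. The flag inclusions $x_{\{j \le i\}} \subset x_{\{j \le i+1\}}$ translate into compatibility relations among the $g_i$ that are exactly equivalent to the existence of a unique $f \in \nil(V(y))$ whose restriction to $\bigoplus_{j \le i} V_j(y)$ equals $g_i$; by~\eqref{lineartoconfiguration} this $f$ satisfies $\varphi_{V(y)}(f) = x$. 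Since the formula in~\eqref{lineartoconfiguration} is polynomial in $f$ and the dimensions match ($\dim\nil(V(y)) = \sum_i (p_i - p_{i-1})(d - p_i) = \dim \F_P$), $\varphi_{V(y)}$ is a smooth diffeomorphism onto $\F^y$.

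For the second stage, formula~\eqref{perpcompatiblesplitting} exhibits $V_i(y)$ via iterated orthogonal projections from $y$, so it depends real-analytically on $y \in \F_{P^\ast}$; this makes $\V$ a smooth vector bundle and the assignment $(y, f) \mapsto \varphi_{V(y)}(f)$ jointly smooth. On any compact subset of $\V$, the differential of $\varphi$ is uniformly bounded above and its fiberwise restriction is uniformly non-degenerate, so the inverse function theorem together with the mean value inequality yields uniform two-sided Lipschitz bounds on small neighborhoods, giving the local bilipschitz property. The main obstacle is the algebraic lemma in the first stage: one must verify that the data $\{g_i\}$ obtained by the graph description assembles into a single element of $\nil(V(y))$ and that this element is unique, by carefully tracking the interplay between the flag inclusion relations and the nilpotency-shift condition $f(V_i) \subset \bigoplus_{j > i} V_j$ at each level. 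This is a version of the Bruhat big-cell parametrization of the partial flag variety, but requires attention to conventions to apply cleanly in the present setting.
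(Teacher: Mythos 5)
Your argument is necessarily a different route from the paper's, because the paper gives no proof at all here: it simply cites Theorem 2.4 and Section 4 of \cite{LL23}. What you propose --- a fiberwise Bruhat big-cell parametrization (each $x_{\{j\le i\}}$ transverse to $y_{\{j>i\}}=\bigoplus_{j>i}V_j(y)$ is a graph, and the nested graphs assemble into a unique $f\in\nil(V(y))$), followed by real-analytic dependence of the perpendicular splitting on $y$ and a compactness argument --- is in substance the argument of \cite{LL23}, so the structure is right. The algebraic crux you flag does close: define $f$ on $V_i$ by $f(v)=\pi_i(v)-v$, where $\pi_i$ is the projection onto $x_{\{j\le i\}}$ along $\bigoplus_{j>i}V_j$; then $f(V_i)\subset\bigoplus_{j>i}V_j$, $(\id+f)\bigl(\bigoplus_{j\le i}V_j\bigr)=x_{\{j\le i\}}$ by nestedness and a dimension count, and uniqueness follows from $x_{\{j\le i\}}\cap\bigoplus_{j>i}V_j=\{0\}$. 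The dimension count $\dim\nil(V)=\#S(P)=\dim\F_P$ is also correct.

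There is, however, one point you must confront, and it is exactly where your second stage is soft. As literally stated, $\varphi:\V\to\F_P$ cannot be a homeomorphism, local or otherwise: $\dim\V=\dim\F_{P^\ast}+\#S(P)=2\dim\F_P$, and the images $\F^{y}$ for nearby $y$ overlap, so $\varphi$ is not even locally injective. Your inference ``the fiberwise differential is uniformly non-degenerate, hence the inverse function theorem gives two-sided bounds'' silently assumes invertibility of the \emph{total} differential, which fails for the map as written. The intended statement (and the one proved in \cite{LL23}) has target $\X_{T_P}\cong(\F_P\times\F_{P^\ast})'$: since $f(V_k)\subset\bigoplus_{j>k}V_j$ implies $(\id+f)\bigl(\bigoplus_{j\ge i}V_j\bigr)=y_{\{j\ge i\}}$, the full configuration $\varphi_{V(y)}(f)$ records the pair $(x,y)$, i.e.\ $\varphi$ is fiber-preserving over $\F_{P^\ast}$. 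For that map the differential is block-triangular over the base, so your fiberwise non-degeneracy plus joint smoothness does yield local bilipschitz invertibility on compact pieces of $\V$. With the target corrected in this way (or with the conclusion weakened to: $\varphi$ is locally Lipschitz and each $\varphi_{V(y)}$ is bilipschitz onto $\F^{y}$ uniformly on compacta, which is all the paper uses in Lemma \ref{angle} and the covering argument), your proof is complete in outline.
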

\begin{proof}This follows from \cite{LL23} theorem 2.4 and its proof (\cite{LL23}, section 4). \end{proof}

\begin{theorem}\label{changeofsplitting} Let \(y\in \F_{P^\ast} \) and \(V,W\) be two splittings compatible with \(y\). Then, the mapping
  \[\varphi_W^{-1}\circ \varphi_{V}:\nil(V) \to \nil (W),\]
  is affine.
\end{theorem}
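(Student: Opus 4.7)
The plan is to parametrize a flag in $\F_P^y$ by blockwise graph data over each $V_i$ (resp.\ $W_i$), encode the change of splitting $V \rightsquigarrow W$ by a single nilpotent ``gauge element'' $h$, and show that the induced change of graph coordinates is affine block by block.

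First I would show that \emph{any} two splittings $V,W$ compatible with the same $y$ are related by a unique $h \in \nil(V)$ via $W_i = (\id + h)(V_i)$ for every $i$. Indeed, since $F_i := y_{\{j : j \ge i\}} = \bigoplus_{j \ge i} V_j = \bigoplus_{j \ge i} W_j$, both $V_i$ and $W_i$ are complements of $F_{i+1}$ in $F_i$, so there is a unique linear map $h_i : V_i \to F_{i+1}$ with $W_i = \{v + h_i(v) : v \in V_i\}$, and $h_{M+1} = 0$ because $V_{M+1} = W_{M+1} = F_{M+1}$. Assembling the $h_i$ yields $h \in \nil(V)$, and one checks $(\id + h)(V_i) = W_i$.

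Next, given $f \in \nil(V)$, I set $x = \varphi_V(f)$ and use the fact that a flag in $\F_P^y$ is completely determined by its pieces $x_{\{i\}}$, each of which is the graph of a unique linear map from $V_i$ (resp.\ $W_i$) into $F_{i+1}$. Concretely, writing $v + f(v) = w + r$ with $w \in W_i$ and $r \in F_{i+1}$, projection onto $V_i$ along $F_{i+1}$ forces $v$ to be the $V_i$-component of $w$, i.e.\ $w = v + h(v)$ and then $r = f(v) - h(v)$. This gives the blockwise identity
\begin{equation*}
 g|_{W_i} \;=\; \bigl(f|_{V_i} - h|_{V_i}\bigr) \circ \bigl((\id + h)|_{V_i}\bigr)^{-1},
\end{equation*}
where $g = \varphi_W^{-1}(x)$. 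Since $h$ depends only on $(V,W)$, this is an affine function of $f|_{V_i}$: a translation by $-h|_{V_i} \circ ((\id + h)|_{V_i})^{-1}$ plus the linear map $f|_{V_i} \mapsto f|_{V_i} \circ ((\id + h)|_{V_i})^{-1}$.

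Finally, an element of $\nil(V)$ is determined by the collection $(f|_{V_i})_i$ and likewise for $\nil(W)$, so assembling the blockwise affine identities shows that $f \mapsto g$, i.e.\ $\varphi_W^{-1} \circ \varphi_V$, is affine. The main technical point I expect to have to handle carefully is verifying that the blockwise change of coordinates above genuinely lands in $\nil(W)$ (i.e.\ respects the nilpotent structure associated with the new splitting) and that the construction is compatible with the relations $x_{I \cup J} = x_I + x_J$, $x_{I \cap J} = x_I \cap x_J$; both follow from the identity $\varphi_V(f)_I = (\id+f)(\bigoplus_{i \in I} V_i)$ in (\ref{lineartoconfiguration}), but they require the bookkeeping of indices, which is the one piece of genuine (if routine) work.
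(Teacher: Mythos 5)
Your argument is correct, and since the paper's own ``proof'' of this theorem is only the citation ``See \cite{LL23}, lemma 5.8,'' you have supplied a self-contained proof where the paper gives none. The gauge element $h$, the decomposition $v + f(v) = w + r$ with $w \in W_i$, $r \in F_{i+1}$, and the resulting blockwise formula $g|_{W_i} = (f|_{V_i}-h|_{V_i})\circ\bigl((\id+h)|_{V_i}\bigr)^{-1}$ are all right, and the two remaining verifications you flag (that $g \in \nil(W)$ and that the configuration relations are preserved) do go through.

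One remark that collapses the bookkeeping you anticipate: the space $\nil(V)$ depends only on the filtration $F_i := y_{\{j\colon j\geq i\}}$, not on the splitting. Indeed the defining condition $f(V_{M+1})=0$, $f(V_i)\subset\bigoplus_{j>i}V_j$ is equivalent to $f(F_i)\subset F_{i+1}$ for all $i$, so $\nil(V)=\nil(W)=:\mathfrak n$ as subspaces of $\text{End}(\R^d)$, and $\mathfrak n$ is a nilpotent subalgebra (closed under composition). Your gauge element $h$ lies in $\mathfrak n$, so $(\id+h)^{-1}=\id+h'$ with $h':=-h+h^2-\cdots\in\mathfrak n$. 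Your blockwise identity is then the single global equation $\id+g=(\id+f)\circ(\id+h)^{-1}$, i.e.
\[
g \;=\; f + h' + f h',
\]
which is visibly an affine function of $f$ with values in $\mathfrak n$. Since $\id+g$ is just $\id+f$ pre-composed with the fixed automorphism $(\id+h)^{-1}$ of $\R^d$, and this automorphism carries each $W_i$ to $V_i$, the identity $\varphi_W(g)_I=(\id+g)\bigl(\bigoplus_{i\in I}W_i\bigr)=(\id+f)\bigl(\bigoplus_{i\in I}V_i\bigr)=\varphi_V(f)_I$ holds for every $I$ at once, so the compatibility with $x_{I\cup J}=x_I+x_J$, $x_{I\cap J}=x_I\cap x_J$ and the membership $g\in\nil(W)$ are automatic.
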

\begin{proof} See \cite{LL23},  lemma 5.8. 
\end{proof}

Let \( g \in SL(d,\R )\).  We note that if \(V = (V_1,\ldots,V_{M+1})\) is a splitting compatible with \(y\) then \(g^{-1}V = (g^{-1}V_1,\ldots, g^{-1}V_{M+1})\) is a splitting compatible with  \(g^{-1}y\).
For the coordinates given by these two splittings the action of \(g\)  is linear between the corresponding fibers:

\begin{theorem}[Linearizing coordinates]\label{linearactionlemma}
  For each \(y\in \F_{P^\ast} \) and each adapted splitting \(V\) one has
  \[\varphi_{V}^{-1}\circ g \circ \varphi_{g^{-1}V}(f) = g fg^{-1},\]
  for all \(g \in G\) and all \(f \in \nil (g^{-1}V)\).

  In particular, \(\varphi_{V}^{-1}\circ g\circ \varphi_{g^{-1}V}:\nil(g^{-1}V) \to \nil(V)\) is linear and the action of \( SL(d,\R) \) on \(\X_{T_P} \) is affine on each fiber.
\end{theorem}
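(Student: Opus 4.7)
The plan is to prove the identity by a direct computation starting from the definition of $\varphi_V$, and then deduce the affine statement by combining it with Theorem \ref{changeofsplitting}. The argument is essentially formal once one keeps careful track of which splitting indexes which coordinate chart.

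First I would verify that conjugation by $g$ sends $\nil(g^{-1}V)$ into $\nil(V)$. If $f \in \nil(g^{-1}V)$, then from $f(g^{-1}V_{M+1}) = \{0\}$ one gets $gfg^{-1}(V_{M+1}) = g f(g^{-1}V_{M+1}) = \{0\}$, and from $f(g^{-1}V_i) \subset \bigoplus_{j > i} g^{-1}V_j$ one gets $gfg^{-1}(V_i) \subset \bigoplus_{j > i} V_j$, so $gfg^{-1} \in \nil(V)$.

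Next, applying the definition \eqref{lineartoconfiguration} for the splitting $g^{-1}V$ (whose compatibility with $g^{-1}y$ was already noted just before the statement), and then using that $g$ is linear, gives for each atom $I \in T_P$
\[
g \cdot \varphi_{g^{-1}V}(f)_I \;=\; g(\id + f)\Bigl(\bigoplus_{i \in I}g^{-1}V_i\Bigr) \;=\; (\id + gfg^{-1})\Bigl(\bigoplus_{i \in I} V_i\Bigr) \;=\; \varphi_V(gfg^{-1})_I.
\]
Since this holds for every $I$, one concludes $\varphi_V^{-1} \circ g \circ \varphi_{g^{-1}V}(f) = gfg^{-1}$, and linearity in $f$ is immediate.

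Finally, for the last sentence, the action of $g$ on $\X_{T_P}$ sends the fiber over $g^{-1}y$ to the fiber over $y$. In the canonical chart $\varphi_{V(g^{-1}y)}$ on the source and $\varphi_{V(y)}$ on the target (using the perpendicular compatible splittings from \eqref{perpcompatiblesplitting}), this action factors as
\[
\varphi_{V(y)}^{-1} \circ g \circ \varphi_{V(g^{-1}y)} \;=\; \bigl(\varphi_{V(y)}^{-1} \circ g \circ \varphi_{g^{-1}V(y)}\bigr) \circ \bigl(\varphi_{g^{-1}V(y)}^{-1} \circ \varphi_{V(g^{-1}y)}\bigr).
\]
The right factor is affine by Theorem \ref{changeofsplitting} (both $g^{-1}V(y)$ and $V(g^{-1}y)$ are compatible splittings for $g^{-1}y$), and the left factor is the linear map $f \mapsto gfg^{-1}$ just computed, so the composition is affine. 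There is no real obstacle here; the only delicate point is keeping the domain of each $\varphi$ straight so that the computation stays within $\nil(\cdot)$ at each step.
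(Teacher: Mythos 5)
Your proof is correct. The paper itself gives no argument here --- it simply cites \cite{LL23}, Lemma 6.2 and Corollary 6.3 --- and your direct computation (rewriting $g(\id+f)$ as $(\id+gfg^{-1})g$ and checking atom by atom, after verifying that conjugation maps $\nil(g^{-1}V)$ into $\nil(V)$) is exactly the natural verification underlying that citation. Your deduction of the affine statement by composing the linear map $f\mapsto gfg^{-1}$ with the change-of-splitting map of Theorem \ref{changeofsplitting} is also the intended route, and correctly identifies the one point that needs care, namely that $g^{-1}V(y)$ and $V(g^{-1}y)$ are two generally distinct splittings compatible with the same point $g^{-1}y$.
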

  \begin{proof} See \cite{LL23} lemma 6.2 and corollary 6.3.\end{proof}

  \subsection{Proof of proposition \ref{covering}}
Let \( \g \neq e  \in \G\). Recall that we defined   \(\xi^p(\gamma) \subset \R^d\) as the unique \(p\)-dimensional subspace on which \(p\)-dimensional volume is most contracted by \(\rho(\gamma)^{-1}\).
Since the representation \( \rho\) satisfies the \(P^\ast\)-Anosov condition, we can associate to  the matrix \( \rho(\g)\) the flag 
 \[ \xi^\ast(\g) := (\{0\}, \xi^{d-p_{M}}(\gamma ^{-1}),\ldots,\xi^{d-p_1}(\gamma ^{-1}), \R^d) \in \F_{P^\ast} .\] 
From the definitions we obtain:
\begin{lemma}\label{canonicalsplitting} The sequence of subspaces \(V = (V_1,\ldots, V_{M+1})\) is a splitting compatible with \(y =\xi ^\ast (\g_R) \) where \( V_j\) is the sum of the eigenspaces of \( \sqrt {\rho(\g_R) (\rho( \g_R))^t} \) corresponding to the singular values \( s_i \) for \( p_{j-1} <i \leq p_j.\) \end{lemma}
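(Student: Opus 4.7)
The lemma is a direct verification by unpacking the definitions, carried out via the singular value decomposition of $\rho(\g_R)$.

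First I would fix a singular value decomposition $\rho(\g_R) = UAV^t$ with $U, V \in O(d)$ and $A = \diag(s_1,\ldots,s_d)$ in decreasing order. The matrix $\sqrt{\rho(\g_R)(\rho(\g_R))^t}$ equals $UAU^t$, whose eigenspace for the eigenvalue $s_i$ is the span of the $i$-th column of $U$ (or of the corresponding block in the case of a repeated singular value). The $P$-Anosov hypothesis supplies strict inequalities $s_{p_j} > s_{p_j+1}$ at each $p_j \in P$, so the eigenspaces of $\sqrt{\rho(\g_R)(\rho(\g_R))^t}$ group naturally into $M+1$ blocks of dimensions $p_j - p_{j-1}$, and $V_j$ as defined in the statement is exactly the $j$-th such block. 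In particular, $\bigoplus_{j \ge i}V_j$ has dimension $d - p_{i-1}$, which already matches the dimension of the subspace on the other side of the compatibility identity.

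Next, I would identify the element $y_{\{j : j \ge i\}}$ of the flag $y = \xi^{\ast}(\g_R)$. By the definition of $\xi^{\ast}$, this is $\R^d$ for $i=1$ and $\xi^{d-p_{i-1}}(\g_R^{-1})$ for $i \ge 2$. By the definition of $\xi^p$, the latter is the $(d-p_{i-1})$-dimensional subspace of $\R^d$ on which $\rho(\g_R^{-1})^{-1} = \rho(\g_R)$ contracts volume maximally, which by the variational characterization of singular values (Courant--Fischer) is spanned by the singular vectors of $\rho(\g_R)$ associated to the smallest $d-p_{i-1}$ singular values, namely the ones with indices $p_{i-1}+1,\ldots,d$.

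Combining the two descriptions, both $y_{\{j : j \ge i\}}$ and $\bigoplus_{j \ge i}V_j$ are spanned by the same singular vectors of $\rho(\g_R)$, so they coincide and the compatibility condition holds for every $i \in \{1,\ldots,M+1\}$; hence $V = (V_1,\ldots,V_{M+1})$ is a splitting compatible with $y$. The only point requiring care is to align the conventions for singular vectors used in the two descriptions — the $V_j$ are given via the eigendecomposition of $\rho(\g_R)(\rho(\g_R))^t$, while $\xi^p(\g_R^{-1})$ arises directly from the volume-contraction property used to define the boundary map in Theorem \ref{boundarymapstheorem} and its extension via the Cartan property of Anosov representations — but once this alignment is fixed, the proof is a routine indexing check.
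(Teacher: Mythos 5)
Your strategy --- fix a singular value decomposition $\rho(\g_R)=UAV^t$ and unwind both sides of the compatibility identity --- is the right one, and the paper itself offers nothing beyond ``from the definitions,'' so there is no alternative argument to compare against. But the step you defer as ``aligning the conventions for singular vectors'' is not a routine indexing check; it is the entire content of the lemma, and as written your identification fails. The eigenspaces of $\sqrt{\rho(\g_R)\rho(\g_R)^t}=UAU^t$ are spanned by the \emph{left} singular vectors $u_i$ (columns of $U$), so your $\bigoplus_{j\ge i}V_j=\mathrm{span}(u_{p_{i-1}+1},\dots,u_d)$. On the other hand $y_{\{j:j\ge i\}}=\xi^{d-p_{i-1}}(\g_R^{-1})$ is the $(d-p_{i-1})$-plane on which volume is most contracted by $\rho(\g_R^{-1})^{-1}=\rho(\g_R)$, and since $\rho(\g_R)v_i=s_iu_i$ that plane is $\mathrm{span}(v_{p_{i-1}+1},\dots,v_d)$, spanned by the \emph{right} singular vectors (columns of $V$, i.e.\ eigenvectors of $\rho(\g_R)^t\rho(\g_R)$). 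For a generic matrix these are different orthonormal systems, so ``both are spanned by the same singular vectors of $\rho(\g_R)$'' is not correct, and no bookkeeping of indices repairs it.

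What your computation actually reveals is that the statement carries a misplaced transpose: the splitting compatible with $y=\xi^\ast(\g_R)$ is given by the eigenspaces of $\sqrt{\rho(\g_R)^t\rho(\g_R)}$ (equivalently, $V_j=\rho(\g_R)^{-1}W_j$ where $W_j$ is the $j$-th block of eigenspaces of $\sqrt{\rho(\g_R)\rho(\g_R)^t}$). With that correction the rest of your argument goes through verbatim: the Anosov gaps $s_{p_j}>s_{p_j+1}$ make the blocks well defined, the dimension count matches, and both sides of (\ref{compatiblesplittingequation}) become $\mathrm{span}(v_{p_{i-1}+1},\dots,v_d)$. A complete answer should either prove the corrected statement and flag the discrepancy, or note that the literal statement fails whenever the $U$- and $V$-blocks differ (e.g.\ whenever $\rho(\g_R)$ is far from symmetric); silently passing between $U$ and $V$ does neither.
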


Let \(y =\xi ^\ast (\g_R) \). Using the coordinates on the space \( \F_P^y\) given by theorem \ref{fiberbundle} with the splitting compatible with \(y\) given by lemma \ref{canonicalsplitting}, we can write 
\begin{lemma}\label{angle} Let \(y =\xi ^\ast (\g_R) \) and \(V\) the splitting compatible with \(y\) given by lemma \ref{canonicalsplitting}. Write \(\varphi_V: \nil_{T_P,T_0}(V) \to \X_{T_P,T_0}^{y} = \F_P^y\) for the coordinate mapping given by (\ref{lineartoconfiguration}). Let \(x \) belong to the \(R\)-shadow of the geodesic  ray  \( \s =\{ \g_j \}_{0\leq j \leq R} \). Then, there exists \(K , \tau >0\) such that for \(R \geq K, \) \begin{equation*}  \| \varphi _V ^{-1} (\g_R^{-1}x) \|\; < \;  \tau . \end{equation*} \end{lemma}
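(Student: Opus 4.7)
The approach is to show that $\gamma_R^{-1}x$ stays in a uniformly compact subset of the fiber $\F_P^y$; the bound $\|\varphi_V^{-1}(\gamma_R^{-1}x)\|<\tau$ then follows by continuity of $\varphi_V^{-1}$.  The starting observation is that by equivariance of the boundary map, $\gamma_R^{-1}x=\xi(\hat{x}')$ where $\hat{x}':=\gamma_R^{-1}\hat{x}\in\partial\Gamma$ is the pullback of the boundary point $\hat{x}$ corresponding to $x$.  In particular, $\gamma_R^{-1}x$ lies automatically in the compact limit set $\Lambda_\rho\subset\F_P$, and the content of the lemma is really a uniform transversality of $\xi(\hat{x}')$ with $y$.

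A convenient computational frame is provided by Theorem \ref{linearactionlemma}.  Let $W:=\rho(\gamma_R)V$, which is the splitting compatible with $\rho(\gamma_R)y$ (namely the left singular vector splitting of $\rho(\gamma_R)$, for which $\varphi_W(0)=\xi(\gamma_R)$).  The formula of Theorem \ref{linearactionlemma} applied to $g=\rho(\gamma_R)^{-1}$ reads
\[\varphi_V^{-1}(\gamma_R^{-1}x)\;=\;\rho(\gamma_R)^{-1}\,\varphi_W^{-1}(x)\,\rho(\gamma_R).\]
A direct calculation in the singular basis of $\rho(\gamma_R)$ shows that this conjugation multiplies the $(l,k)$-entry (with $k$ in a block $i$ and $l$ in a strictly later block $j$) of $\varphi_W^{-1}(x)$ by the factor $\sigma_k(\rho(\gamma_R))/\sigma_l(\rho(\gamma_R))$, which by the $P$-Anosov condition is exponentially large in $R$.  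Thus the lemma reduces to an \emph{anisotropic shadow estimate}: for $\hat{x}$ in the $R$-shadow, the $(l,k)$-entry of $h:=\varphi_W^{-1}(\xi(\hat{x}))$ has magnitude at most $C\cdot\sigma_l(\rho(\gamma_R))/\sigma_k(\rho(\gamma_R))$ for a uniform constant $C$.

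To establish this shadow estimate I would write $\hat{x}=\lim_n\gamma_R\tau_n$ with $\{\tau_n\}_{n\geq 0}$ a geodesic ray from $e$, and compare the top $p_j$ left singular subspaces of the product $\rho(\gamma_R\tau_n)=\rho(\gamma_R)\rho(\tau_n)$ with those of $\rho(\gamma_R)$ itself.  The exponentially large block-wise singular gaps of $\rho(\gamma_R)$ supplied by the $P$-Anosov hypothesis, combined with matrix perturbation applied to a product with a strongly dominant factor, should yield block-wise deviations of exactly the required order $\sigma_l/\sigma_k$ uniformly in $n$; passing to $n\to\infty$ then controls $h$ itself.

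The main obstacle is the block-wise singular-subspace perturbation for products in the setting of a partial (along $P$, not full-flag) dominated splitting.  In the Borel-Anosov case one has a complete dominated splitting and can apply arguments as in \cite{JLPX23} directly; for general $P$ one must carefully track the perturbation block by block using only the uniform gaps across blocks, verifying that corrections within each block are uniformly absorbed into the constants as the auxiliary parameter $n$ grows.
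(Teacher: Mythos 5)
The reduction in your first paragraph is on the right track: you correctly identify that the content of the lemma is a \emph{uniform transversality} of $\gamma_R^{-1}x=\xi(\gamma_R^{-1}\hat{x})$ with $y=\xi^*(\gamma_R)$, and that the coordinate bound follows from this together with compactness. But the route you then take, through the conjugation formula of Theorem \ref{linearactionlemma}, does not constitute a proof and actually makes the problem harder rather than easier.

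The issue is that the identity $\varphi_V^{-1}(\gamma_R^{-1}x)=\rho(\gamma_R)^{-1}\varphi_W^{-1}(x)\rho(\gamma_R)$ merely moves the difficulty: after the conjugation the statement $\|\varphi_V^{-1}(\gamma_R^{-1}x)\|<\tau$ is \emph{equivalent} to your ``anisotropic shadow estimate'' $|h_{lk}|\lesssim \sigma_l(\rho(\gamma_R))/\sigma_k(\rho(\gamma_R))$ for $h=\varphi_W^{-1}(x)$. You have not proved this; you sketch a perturbation argument for singular subspaces of products and explicitly acknowledge that you don't know how to carry it out block by block when the dominated splitting along $P$ is not complete. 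So the proposal, as written, only restates the lemma in a different coordinate frame. Moreover, this restated form is delicate: the uniform Cartan/H\"older estimate one gets from Theorem \ref{boundarymapstheorem} applied to the ray toward $\hat x$ (namely $\dist(\xi(\gamma_R),x)\le C e^{-cR}$ with $c$ the worst Anosov gap) gives only $\|h\|\lesssim e^{-cR}$, which does \emph{not} survive multiplication by $\sigma_k/\sigma_l$ when, say, $\sigma_1/\sigma_d$ grows at a faster exponential rate than $e^{cR}$. So the finer block-wise bound you would need is genuinely stronger than the isotropic H\"older estimate, and you cannot fall back on it.

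The paper avoids this entirely by working in the translated frame from the start. One takes the geodesic $\sigma'$ through $\gamma_R$ with endpoint $\hat x$, and left-translates to obtain $\sigma''=\gamma_R^{-1}\sigma'$, a geodesic ray through $\gamma_R^{-1}$ that passes through $e$ at time $R$ and has endpoint $\gamma_R^{-1}\hat x$. Applying the Cartan/H\"older estimate from the proof of Theorem \ref{boundarymapstheorem} to the forward ray from $e$ (and the backward segment toward $\gamma_R^{-1}$) gives directly that $\gamma_R^{-1}x$ stays a bounded distance from $\xi(V)$ and uniformly transversal to $y$: the two endpoints of $\sigma''$ have Gromov product bounded by a universal constant, so the corresponding flags are uniformly in general position, with no anisotropic refinement needed. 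Theorems \ref{fiberbundle} and \ref{changeofsplitting} then convert this uniform transversality and compactness into the stated coordinate bound. In short: the paper's trick is to translate the geodesic \emph{before} reading off the H\"older estimate, so the estimate lands in coordinates already recentered at $\xi(V)$; your proposal applies the group action to the coordinates \emph{after} reading the estimate, and the exponential distortion introduced by that step is exactly the unproved difficulty you flag at the end.
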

\begin{proof}  Since \(x \) belong to the \(R\)-shadow of the geodesic  ray  \( \s =\{ \g_j \}_{0\leq j \leq R} \) there is a geodesic  ray  \(\s' (x)=:\{ \g'_j \}_{j \geq 0} \) such that \( \g'_0= e, \g' _R  = \g _R\)  and \( \xi (\s') = x.\) Applying \(\g_R^{-1}\), there is a geodesic ray \(\s''(x) =\g_R^{-1} \s'(x) =: \{ \g''_j \}_{j \geq 0} \) such that \( \g''_0 = \g_R^{-1}, \g'' _R  = e\)  and \( \xi (\s'') = \g_R^{-1} x.\) By the proof of theorem \ref{boundarymapstheorem}, if \(R\) is large enough, there is \(\tau_0\) such that 
\[ \dist (\xi (V), \g_R^{-1} x ) \; \le \;\tau _0,\]
where \(\xi (V) \in \F_P \) is given by \( \xi (V) := {0} \subset V_1 \subset V_1 \oplus V_2 \subset \ldots \subset \R^d \).  Using theorems \ref{fiberbundle} and \ref{changeofsplitting}, the lemma folllows. \end{proof}
 
We can now prove proposition \ref{covering}: let  \( \s =\{ \g_j \}_{j \geq 0} \) be a geodesic ray with \( \g_0 = e.\)  Let \(V\) the splitting compatible with \(\xi ^\ast (\g_R) \) given by lemma \ref{canonicalsplitting}. By lemma \ref{angle}, if \(R >K, \) the \( R \)-shadow of \( \s\) is contained in \( \varphi _{\rho(\g_R) V}^{-1}\circ \rho(\g_R) \circ \varphi _V (B(0, \tau )),\) where \(B(0, \tau ) \) is  the ball of radius \( \tau  \) in \( \nil_{T_P,T_0}(V).\) Proposition \ref{covering} amounts to the following 
\begin{lemma} With the preceding notations, the image of \(B(0, \tau ) \) by \( \varphi _{\rho(\g_R )V}^{-1}\circ \rho(\g_R ) \circ \varphi _V \) is an ellipsoid with axes 
\( \tau  \exp (\zeta _{i, j }(\g _R)),\)  for all \( (i,j) \in S(P).\) \end{lemma}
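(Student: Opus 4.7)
The proof reduces to a linear algebra calculation once we invoke the linearization of Theorem~\ref{linearactionlemma}. Setting $A:=\rho(\g_R)$ and taking the splitting $AV$ in place of ``$V$'' in that theorem (so that $A^{-1}(AV)=V$), we read off
\[
\varphi_{AV}^{-1}\circ A\circ\varphi_{V}(f)\;=\;A\,f\,A^{-1},\qquad f\in\nil(V).
\]
The composition appearing in the lemma is thus the linear conjugation $\mathrm{Ad}(A)\colon\nil(V)\to\nil(AV)$. As a linear isomorphism of two finite--dimensional Euclidean spaces, it sends the ball $B(0,\tau)$ to an ellipsoid automatically; what the lemma requires is the identification of the singular values of $\mathrm{Ad}(A)$ with respect to the two Hilbert--Schmidt inner products.

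For that diagonalization I would bring in a singular value decomposition $A=L\,\Sigma\,R^{t}$, with $\Sigma=\diag(s_1(A),\ldots,s_d(A))$ and $L,R$ orthogonal, and write $l_i:=Le_i$, $r_i:=Re_i$, so that $Ar_i=s_i\,l_i$ and $A^{-t}r_i=s_i^{-1}\,l_i$. The key geometric point is that Lemma~\ref{canonicalsplitting}, read together with the definition of $\xi^{*}(\g_R)$ via the $\xi^{d-p_k}(\g_R^{-1})$, identifies $V_j$ with the span of the singular vectors of $A$ corresponding to $s_{p_{j-1}+1},\ldots,s_{p_j}$, namely $V_j=\mathrm{span}(r_i\colon p_{j-1}<i\le p_j)$, and therefore $AV_j=\mathrm{span}(l_i\colon p_{j-1}<i\le p_j)$. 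Both $V$ and $AV$ are then \emph{orthogonal} decompositions of $\R^{d}$, so the Hilbert--Schmidt norms on $\nil(V)$ and $\nil(AV)$ are simply the restriction of the standard Hilbert--Schmidt norm on $\mathrm{End}(\R^{d})$, and they admit the orthonormal bases $\{r_m\otimes r_n^{*}\colon(n,m)\in S(P)\}$ and $\{l_m\otimes l_n^{*}\colon(n,m)\in S(P)\}$ respectively.

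Viewing $\mathrm{Ad}(A)$ as $A\otimes A^{-t}$ on $\R^{d}\otimes(\R^{d})^{*}$, the one--line calculation
\[
\mathrm{Ad}(A)\bigl(r_m\otimes r_n^{*}\bigr)\;=\;(Ar_m)\otimes(A^{-t}r_n)^{*}\;=\;\frac{s_m(A)}{s_n(A)}\,l_m\otimes l_n^{*}
\]
displays $\mathrm{Ad}(A)$ as diagonal in the two chosen orthonormal bases, with singular values $s_m(A)/s_n(A)$ indexed by $(n,m)\in S(P)$. Re--indexing $(i,j)=(n,m)$, these singular values are precisely the factors $\exp\bigl(\pm\zeta_{i,j}(\g_R)\bigr)$ appearing in the statement, and the ellipsoid structure with the asserted axes follows. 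In particular, since $(n,m)\in S(P)$ forces $n<m$ and hence $s_m(A)/s_n(A)<1$, the resulting axes shrink compared with $\tau$, in agreement with the geometric expectation that $\rho(\g_R)$ contracts in the coordinates centered at the attracting flag.

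The main obstacle in this plan is the geometric bookkeeping in the second paragraph: one must verify that Lemma~\ref{canonicalsplitting}'s prescription genuinely picks out the singular--vector lines of $A$ that make both $V$ and $AV$ orthogonal decompositions, since this is exactly what causes the Hilbert--Schmidt norms on the two $\nil$-spaces to behave transparently. Once this compatibility is pinned down, the remainder of the proof is the single tensor--product identity above.
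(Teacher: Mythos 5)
Your proof is correct and follows essentially the same route as the paper's: both invoke Theorem~\ref{linearactionlemma} to reduce the map to the conjugation \(f\mapsto \rho(\g_R)f\rho(\g_R)^{-1}\) and then diagonalize it in orthonormal bases built from the singular vectors of \(\rho(\g_R)\) adapted to the splitting of Lemma~\ref{canonicalsplitting}; your global tensor identity \(\mathrm{Ad}(A)=A\otimes A^{-t}\) acting on \(r_m\otimes r_n^{*}\) is just a compact repackaging of the paper's block-by-block singular value computation on each \(\mathrm{Hom}(V_i,V_j)\). Your two side remarks are also right: the axes come out as \(\tau\exp(-\zeta_{i,j}(\g_R))\) (contraction), exactly as in the paper's own computation, so the plus sign in the statement is a typo; and compatibility with \(\xi^{\ast}(\g_R)\) does force \(V_j\) to be spanned by the \emph{right} singular vectors of \(\rho(\g_R)\), which is what makes both \(V\) and \(\rho(\g_R)V\) orthogonal splittings and the Hilbert--Schmidt norms transparent.
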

\begin{proof} By theorem \ref{linearactionlemma},
\[ \varphi _{\rho(\g_R) V}^{-1}\circ \rho(\g_R) \circ \varphi _V (f) \; = \; \rho(\g_R) f \rho(\g_R)^{-1} .\] 
Write \( \nil_{T_P,T_0}(V)\) as  \( \bigoplus _{1\le i<j \le M+1} \text{Hom}(V_i,V_j)\) and \( f \in B(0, \tau) \) as \( f = \{ f_{i, j } \}_{1\le i<j \le M+1}, \) with all \( |f_{i,j} |< \tau.\) The matrix \(\rho( \g_R) \) is block diagonal, made of matrices \(g^i  \in \text{Hom}(V_i,V_i)\). Therefore, \( \rho(\g_R) f \rho(\g_R)^{-1} = \{ g^jf_{i,j} (g^i)^{-1}, 1\le i<j \le M+1\} .\)

 There is an orthonormal basis of \(V_i,\) namely \(\{v_1, \ldots , v_{p_i - p_{i-1}}\},\) such that \( (g^i )^{-1}v_\ell, \ell = 1, \ldots , p_i -p_{i-1}\)  form an orthogonal system with \( \| (g^i)^{-1} v_\ell \| = (s _{i-1+\ell}(\rho (\g_R)))^{-1} .\) Similarly, there is an orthonormal basis of \(V_j,\) namely \(\{u_1, \ldots , u_{p_j - p_{j-1}}\},\) such that \( g^j u_k, k = 1, \ldots , p_j -p_{j-1}\)  form an orthogonal system with \( \| g^j u_k \| = s _{j-1+k} (\rho (\g_R)) .\) 
 
  For \( 1\leq \ell \leq p_i - p_{i-1}, 1\leq k \leq p_j -p_{j-1},  \) write \( f^{ \ell ,k } \) for the element of \( \text{Hom}(V_i,V_j)\) that sends \(  (g^i )^{-1}v_\ell/  \|(g^i )^{-1}v_\ell \|  \) to \( u_k \) and the orthogonal space \(((g^i )^{-1}v_\ell)^\perp \) to \( 0 \). For all \(  \ell, k,0< \ell \le p_i- p_{i-1} , 0< k \leq p_j - p_{j-1}, \) the \( f^{\ell ,k} \) form   an orthogonal basis of \(  \text{Hom}(V_i,V_j)\) such that the \( \varphi _{\rho(\g_R )V}^{-1}\circ \rho(\g_R) \circ \varphi _V (f ^{\ell, k} ) \) are orthogonal with norm \( ( s _{i-1+\ell}(\rho (\g_R)) )^{-1} s _{j-1+k} (\rho (\g_R)) = \exp (-\zeta _{i-1+k, j-1+\ell }(\g _R) ).\)  The lemma follows by putting all the \( \text{Hom}(V_i,V_j)\)  together. \end{proof}
  
  \appendix

\section{Quasi-geodesic semi-groups}

\subsection{Goal}

If \(\Gamma\) is the free group with generators \(a,b\) then the semi-group \(S\) consisting of reduced words which begin and end with \(a\) has the following nice properties:
\begin{enumerate}
 \item \(S\) generates \(\Gamma\) as a group.
 \item For every \(n\) and \(\gamma_1,\ldots, \gamma_n \in S\) the geodesic joining the neutral element \(e\) to \(\gamma_1\cdots \gamma_n\) passes through all partial products \(\gamma_1\cdots \gamma_k\) for \(k = 1,\ldots, n\).
 \item For every \(\gamma \in \Gamma\) there exists two letter words \(L(\gamma)\) and \(R(\gamma)\) such that \(L(\gamma)\gamma R(\gamma) \in S\).
\end{enumerate}

Our goal in this appendix is to give a similar construction for a general hyperbolic group.

For this purpose we fix a group \(\Gamma\) with a finite symmetric generating set \(\S\).

We let \(|\gamma|\) be the word length of \(\gamma \in \Gamma\) with respect to \(\S\) and consider the word distance \(\dist(\gamma,\eta) = |\gamma^{-1}\eta|\).

We assume that there is \(\delta > 0\) (fixed from now on) such that \(\Gamma\) is \(\delta\)-hyperbolic with respect to \(\dist\), and non-elementary (i.e. contains two independent loxodromic elements).

Recall that given positive constants \(C,D > 0\) a \((C,D)\)-quasi-geodesic is a sequence \(x: I \cap \Z \to \Gamma\) where \(I\) is an interval, satisfying
\[C^{-1}|m-n| - D \le \dist(x_m,x_n) \le C|m-n| + D,\]
for all \(m,n \in I \cap \Z\).

We say a semigroup \(S \subset \Gamma\) is quasi-geodesic if there exist constants \(C,D > 0\) such that for all \(n\) and \(\gamma_1,\ldots, \gamma_n \in S\) the sequence
\[e, \gamma_1,\gamma_1\gamma_2,\ldots, \gamma_1\cdots \gamma_n,\]
is visited in left to right order by some \((C,D)\)-quasi-geodesic.

We now state the main result of the appendix.

\begin{proposition}\label{semigrouplemma}
 Let \((\Gamma,\S)\) be a non-elementary hyperbolic group with a finite symmetric generator \(\S\).   Then there exists a semigroup \(S \subset \Gamma\), a finite set \(F \subset \Gamma\) and mappings \(L,R:\Gamma \to F\) with the following properties:
 \begin{enumerate}
  \item \(S\) generates \(\Gamma\) as a group,
  \item \(S\) is quasi-geodesic, and
  \item \(L(\gamma)\gamma R(\gamma) \in S\) for all \(\gamma \in \Gamma\).
 \end{enumerate}
\end{proposition}

In the case \(\Gamma\) is the free group generated freely by \(\S = \lbrace a,b,a^{-1},b^{-1}\rbrace\) we may set \(S\) to be the semigroup of elements beginning and ending in \(a\) in reduced form, \(L(\gamma)\) to be \(a\) if the reduced form of \(\gamma\) does not begin with \(a^{-1}\) and \(ab\) otherwise, and \(R(\gamma)\) similarly to be \(a\) if the last letter of \(\gamma\) is not \(a^{-1}\) and \(ba\) otherwise.

The arguments in what follows use the local-to-global principle for hyperbolic groups as in \cite[Section 3]{gouezel}.

\subsection{Proof of proposition \ref{semigrouplemma}}

Recall that the Gromov product (based at the neutral element \(e \in \Gamma\)) is defined by
\[(\gamma,\eta) = \frac{|\gamma| + |\eta| - |\gamma^{-1}\eta|}{2}.\]

We assume \(\delta > 0\) is such that
\[\min\lbrace (\gamma_1,\gamma_2),(\gamma_2,\gamma_3)\rbrace \le (\gamma_1,\gamma_3) + \delta,\]
for all \(\gamma_1,\gamma_2,\gamma_3 \in \Gamma\).  We further assume that
\[\dist(e,\alpha) \le (p,q) + \delta,\]
for all \(p,q \in \Gamma\) and geodesic \(\alpha\) joining them (such a choice of \(\delta\) is possible by \cite[Proposition 1.22]{bridson-haefliger}).

\begin{lemma}
 For each \(C \ge 1\) there exist \(a,b \in \Gamma\) such that setting \(A = \lbrace a,b,a^{-1},b^{-1}\rbrace\) and defining
 \[c_1 = \max\limits_{x,y \in A:\ x \neq y}(x,y)\text{ and }c_2 = \min\limits_{x \in A}|x|,\]
 one has \(c_1 + 2\delta + r < \frac{1}{2}{c_2}\) for some \(r > C(c_1+2\delta+1)\).
\end{lemma}
\begin{proof}
 Since \(\Gamma\) is non-elementary we may take \(x,y\) two independent loxodromic elements (i.e. the set of boundary fixed points of \(x\) and \(y\) are disjoint).
 
 Letting \(a = x^n\) and \(b = y^n\) for sufficiently large \(n\) establishes the claim since \(c_1(x^n,y^n)\) remains bounded, while \(c_2(x^n,y^n)\) goes to infinity.
\end{proof}

We fix from now on a set \(A\) as in the previous lemma with corresponding constants \(c_1,c_2,r\) satisfying
\begin{equation}\label{requation}
r > 32(c_1+2\delta+1). 
\end{equation}

\begin{lemma}\label{borrowedlemma}
 The sets defined for \(x \in A\) by \(V(x) = \lbrace \gamma: (\gamma,x) \ge c_1+2\delta + r\rbrace\) are pairwise disjoint.  
 
 For all \(x \in A\) and \(\gamma \notin V(x^{-1})\) one has \(x\gamma \in V(x)\).  
 
 For all \(x,y \in A\) with \(x \neq y\), if \(\gamma \in V(x)\) and \(\eta \in V(y)\) then \((\gamma,\eta) \le c_1+2\delta\).
 
 If \(\gamma \in V(x)\) for some \(x \in A\) then \(|\gamma| \ge r\). 
\end{lemma}
\begin{proof}
By hypothesis \(2\delta + r \in (2\delta,c_2/2 - c_1)\).  With this property, the first three statements are claims 1-3 in the proof of \cite[Lemma 4.1]{random}.

For the last statement observe that
\[|\gamma| = (\gamma,\gamma) \ge \min\lbrace (\gamma,x),(x,x)\rbrace - \delta \ge \min \lbrace c_1 + 2\delta + r, c_2\rbrace - \delta \ge r.\]
\end{proof}

\begin{lemma}\label{mlemma}
 There exists \(m\) such that for all \(\gamma \in V(a)\) and all \(\eta\) with \(\dist(\gamma,\eta) \le 2c_2\) one has \(a^m\eta \in V(a)\).
\end{lemma}
\begin{proof}
 We first observe that by lemma \ref{borrowedlemma} we have \((a^{-1},a) \le c_1 + 2\delta\) and therefore a geodesic fixed under multiplication by \(a\) is at distance at most \(c_1+3\delta\) from \(a^m\) for all \(m\).
 
 It follows, letting \(\ell = \lim\limits_{n \to +\infty}\frac{1}{n}|a^n|\) be the translation length of \(a\) that
 \[\ell \ge |a| - 2(c_1+3\delta) \ge 2(c_1+2\delta+r) -2(c_1+3\delta) \ge 2r-2\delta.\]
 
 Hence we obtain for all \(m \ge 1\) that
 \begin{align*}
(a,a^m) &= \frac{|a|+|a^m|-|a^{m-1}|}{2} 
\\ &\ge \frac{\ell -2(c_1+3\delta) + m\ell - 2(c_1+3\delta) - (m-1)\ell -2(c_1+3\delta)}{2}
\\ &= \ell - 3(c_1+3\delta)
\\ &\ge  2r - 3c_1-11\delta
\\ &\ge  c_1+3\delta + r.
 \end{align*}
 
In what follows we will need to switch basepoints so we recall that the Gromov product
\[(x,y)_z = \frac{\dist(x,z)+\dist(y,z)-\dist(x,y)}{2},\]
satisfies
\((x,y)_z + (z,y)_x = \dist(x,z)\).

We now calculate using lemma \ref{borrowedlemma}
 \begin{align*}
(a^m,a^{m}\eta) &= (e,\eta)_{a^{-m}} 
\\ &= |a^m| - (a^{-m},\eta)_{e} 
\\ &\ge |a^m| - (a^{-m},\gamma)_{e} - 2c_2 
\\ &\ge |a^m| - c_1 - 2\delta - 2c_2 
\\ &\ge c_1+3\delta + r,
 \end{align*}
 if we choose \(m\) large enough depending only on \(c_1,c_2\) and \(\delta\).
 
 We now conclude the proof using the hyperbolicity property since
 \[(a,a^m\eta) \ge \min\lbrace (a,a^m), (a^m,a^m\eta) \rbrace -\delta \ge c_1 + 2\delta + r,\]
 so \(a^m\eta \in V(a)\) as required.
\end{proof}

We define
\[T = \lbrace \gamma: \gamma \in V(a)\text{ and }\gamma^{-1} \in V(a^{-1})\rbrace.\]

With \(m\) given by the previous lemma, for each \(\gamma \in \Gamma\) let
\[L(\gamma) = \left\lbrace\begin{array}{ll}a^{m+1} &\text{ if } \gamma \notin V(a^{-1}),\\ a^{m+1}b &\text{ otherwise,}\end{array}\right.\]
and
\[R(\gamma) = \left\lbrace\begin{array}{ll}a &\text{ if } (L(\gamma)\gamma)^{-1} \notin V(a),\\ ba &\text{ otherwise.}\end{array}\right.\]

\begin{lemma}
 One has \(L(\gamma)\gamma R(\gamma) \in T\) for all \(\gamma \in \Gamma\).
\end{lemma}
\begin{proof}
Since \(\dist(L(\gamma)\gamma, L(\gamma)\gamma R(\gamma)) \le 2c_2\) this follows immediately from lemma \ref{mlemma}. 
\end{proof}

We now let \(S\) be the semigroup generated by \(T\).

\begin{lemma}
 The semigroup \(S\) generates \(\Gamma\) as a group.
\end{lemma}
\begin{proof} 
 From the definition \(a \in S\) and  (using lemma \ref{borrowedlemma}) \(aba \in S\).  Therefore \(L(\gamma),R(\gamma)\) belong to the group generated by \(S\) for all \(\gamma\).
 
 The equation
 \[\gamma = L(\gamma)^{-1}\left( L(\gamma)\gamma R(\gamma) \right)R(\gamma)^{-1}\]
 now implies that \(S\) generates \(\Gamma\) as a group, as required.
\end{proof}

As a first step towards showing that \(S\) is quasi-geodesic we prove the following.

\begin{lemma}\label{boundeddistanelemma}
For any \(n\) and \(\gamma_1,\ldots, \gamma_n \in T\), letting \(\alpha\) be a geodesic joining \(e\) and \(\gamma_1\cdots \gamma_n\), one has
\[|\gamma_1\cdots\gamma_n| \ge |\gamma_1\cdots \gamma_k| + |\gamma_{k+1}\cdots \gamma_n| - r/8,\]
amd \(\dist(\gamma_1\cdots \gamma_k,\alpha) \le r/8\) for all \(k = 0,\ldots, n\).
\end{lemma}
\begin{proof}
Setting \(x_k = \gamma_1\cdots \gamma_k\) for \(k = 0,\ldots, n\) we observe that
\[(x_{k-1},x_{k+1})_{x_k} = (\gamma_{k}^{-1},\gamma_{k+1}) \le c_1 + 2\delta,\]
and \(\dist(x_k,x_{k+1}) = |\gamma_{k+1}| \ge r\) by lemma \ref{borrowedlemma}.

Hence the sequence is a \((c_1+\delta,r)\)-chain as in \cite{gouezel}.

We observe that
\[r > 2(c_1 + 2\delta) +4\delta + 1,\]
and therefore by \cite[Lemma 3.8]{gouezel} we have
\[(x_0,x_n)_{x_k} \le c_1 + 4\delta,\]
for all \(k\).

This implies that
\[\dist(x_k,\alpha) \le c_1 + 5\delta < r/8.\]

Also since \((x_0,x_n)_{x_k} = (x_k^{-1},x_k^{-1}x_n)_e\) we get by definition
\[\frac{|\gamma_1\cdots \gamma_k| + |\gamma_{k+1}\cdots \gamma_n| - |\gamma_1\cdots \gamma_n|}{2} \le c_1 + 4\delta \le r/16,\]
which yields the required lower bound for \(|\gamma_1\cdots \gamma_n|\).
 \end{proof}

We now conclude the proof of proposition \ref{semigrouplemma}.

\begin{lemma}
The semigroup \(S\) is quasi-geodesic.
\end{lemma}
\begin{proof}
Fix \(n\) and \(\gamma_1,\ldots, \gamma_n \in T\), and let \(\alpha\) be a geodesic with \(\alpha_0 = e\) and \(\alpha_{N_n} = \gamma_1\cdots \gamma_n\) with \(N_n = |\gamma_1\cdots \gamma_n|\).

Since the word distance is integer valued setting \(C = \lfloor r/8 \rfloor\) we have by lemma \ref{boundeddistanelemma} that there exist \(N_0 = 0, N_1, \ldots,N_n = |\gamma_1\cdots \gamma_n|\) such that,
\[\dist(\gamma_1\cdots \gamma_k, \alpha_{N_k}) \le C \le r/8,\]
for all \(k\).

We also obtain from lemma \ref{boundeddistanelemma} applied to \(\gamma_1,\ldots, \gamma_{k+1}\) and lemma \ref{borrowedlemma} that
\begin{align*}
N_{k+1} - N_k &\ge |\gamma_1\cdots \gamma_{k+1}|-r/8 - |\gamma_1\cdots \gamma_k| -r/8
\\ &\ge |\gamma_{k+1}| - 3r/8 \ge 5r/8 \ge 5C,
\end{align*}
and in particular \(N_{k} \le N_{k+1}\) for \(k = 0,\ldots, n-1\).

We now construct a path \(\beta\) which we claim to be quasi-geodesic (with constants independent of \(n\) and \(\gamma_1,\ldots,\gamma_n\)) by concatenating segments of \(\alpha\) with a paths to and from each \(\gamma_1\cdots \gamma_k\) (which will have length at most \(2C\) as seen above).   

To be precise we choose \(\beta:[0,N_n + 2(n-1)C] \cap \Z \to \Gamma\) such that
\begin{enumerate}
 \item For each \(k = 0,\ldots,n-1\) one has \(\beta_{m+N_k+2kC} = \alpha_{m+N_k}\) for all \(m \in [0,N_{k+1}-N_k] \cap \Z\).
 \item \(\beta_{N_k + 2(k-1)C + C} = \gamma_1\cdots \gamma_k\) for \(k = 1,\ldots, n-1\).
 \item \(\dist(\beta_m, \beta_{m+1})\le 1\) for all \(m\).
\end{enumerate}

In view of the third property above we have
\(\dist(\beta_l,\beta_m) \le |l-m|\).

For the lower bound given \(0 \le l < m \le N_n + 2(n-1)C\) we write
\[\dist(\beta_l,\beta_m) = \sum\limits_{k = 0}^{m-l}\dist(\beta_{l+k},\beta_{l+k+1}) \ge \sum\limits_{k = 0}^{m-l}f(k),\]
where we set \(f(k) = 1\) if \(k \in [N_i+2iC,N_{i+1}+2iC]\) for some \(i\) and \(f(k) = -1\) otherwise.

Since the sequence \(f(0),\ldots,f(l-m)\) consists of subsequences of runs of at most \(2C\) consecutive times the value \(-1\), with runs of at least \(5C\) times the value \(1\) in between, we obtain 
\[\dist(\beta_l,\beta_m) \ge 3(|m-l| - 4C)/7 - 4C,\]
which establishes that \(S\) is \((7/3, 6C)\)-quasi-geodesic.
\end{proof}

\newcommand{\etalchar}[1]{$^{#1}$}

\end{document}